\documentclass[a4paper,11pt]{article}
\usepackage{multirow}
\usepackage{color}
\usepackage{latexsym}
\usepackage{amssymb}
\usepackage{amsfonts}
\usepackage{amsmath}
\usepackage{amsthm}
\usepackage{indentfirst}
\usepackage{mathrsfs}
\usepackage{verbatim}

\usepackage[textsize=footnotesize]{todonotes}
\usepackage[normalem]{ulem}
\usepackage{mathtools}

\newtheorem {lemma}{Lemma}[section]
\newtheorem {theorem}[lemma]{Theorem}

\newtheorem {proposition}[lemma]{Proposition}
\newtheorem {corollary}[lemma]{Corollary}

\theoremstyle{remark}

\usepackage[latin1]{inputenc}

\usepackage{fullpage}

\author{Luca Ferrari\thanks{Dipartimento di Matematica e Informatica ``U. Dini", University of Firenze, Firenze, Italy\quad \texttt{luca.ferrari@unifi.it}. Partially supported by INdAM -- GNCS project CUP\_E53C23001670001.}\and Francesco Verciani\thanks{Institut f\"ur Mathematik, Universit\"at Kassel, 34132 Kassel, Germany, \, \texttt{francesco.verciani@mathematik.uni-kassel.de}. Partially supported by German Science Foundation grant 522790373.}}

\title{A new generalization of parking functions: $\rho$-Naples parking functions}

\date{}

\begin{document}
	
\maketitle
	
\begin{abstract}
	
	We introduce a new model of parking functions, called \emph{$\rho$-Naples parking functions}, in which each car has a certain number of spots to check preceding its preferred one before looking for available spots by moving forward. This notion, where each car has \emph{its own} number of previous spots to check, generalizes that of Naples parking function, where this number is the same for all cars. We start by showing that, for a given sequence of preferences for each car, a successful parking strategy (that is an assignment of how many backward spots each car is allowed to check) remains successful if we increase the number of backwards spots to check for some of the cars. This means that the set of successful strategies for a given parking preference is an up-set in the lattice of parking strategies, which raises the intriguing question of characterizing such up-sets. Though we are not able to completely solve this problem, we provide a characterization of those successful strategies which minimize the total number of allowed backward steps for a given parking preference, and we determine a lower bound for the number of cars which are allowed to move backwards (also showing that it can be attained). To conclude, we analyze the specific situation in which each car can take at most one backward step, and we completely characterize the set of successful parking preferences in this case.
  
\end{abstract}
	
\section{Introduction}

%
%

Parking functions were introduced by Konheim and Weiss \cite{KW} in a computer science problem, but quickly proved to be extremely interesting in themselves, due to their remarkable combinatorial properties and applications. For instance, parking functions have appeared in contexts like hyperplane arrangements \cite{St}, representation theory \cite{KT} and combinatorial geometry \cite{AW}. The enumerative combinatorics of parking functions is particularly elegant. Classical results include invariance of a parking function under rearrangement and a neat closed formula for their enumeration (more on this below, see also the classical survey \cite{Y}).

Although it is possible to give a purely formal definition of parking functions, a classical model consists of interpreting them as sequences of positive integers describing the preferences of an ordered list of cars that successfully park in an ordered set of parking spots arranged on a one-way street. The classical parking rule states that every car has its preferred parking spot and, if this is already occupied, it continues forward and park in the first available spot (if there is one). In recent years, this parking model has been widely generalized giving rise to a host of new problems, each of which raises intriguing combinatorial questions, especially from the point of view of enumeration. A few dozen articles (at least) have appeared in the last 5 years dealing with the combinatorics of several kinds of generalized parking problems. A nice and entertaining introduction to this sort of problems, accessible also to undergraduates, is the paper \cite{CCHJR}.

Among all proposed generalizations, one of the most studied ones is certainly that of Naples parking functions \cite{B,CHJLRRR}. In this specific parking model, cars are required to check a limited number of parking spots preceding their preferred one (in case this is already occupied) before proceeding forward. We emphasize the fact that the number of backward spots to be checked is fixed, and is the same for all cars. The combinatorics of Naples parking functions turn out to be significantly more challenging than that of classical parking functions. For instance, it is no longer true that, in general, any rearrangement of a Naples parking function is still a Naples parking function. Moreover, in sharp contrast with the classical case, no satisfactory closed formula is known for the enumeration of Naples parking function (while some recursive formulas have been obtained, see for instance \cite{CHJLRRR}).

The goal of the present paper is to extend the Naples parking rule, by setting a specific number of backward spots to check for each car (not necessarily the same). This new parking model generalizes both the classical and the Naples parking models, and poses several intriguing combinatorial questions, which we start investigating here. In particular, we address some issues that are somewhat dual to the typical problems considered for other parking models. Indeed, instead of fixing the parking rule and studying the set of successful parking preferences, we rather fix a parking preference and investigate the space of our parking rules that make that preference successful. It turns out that this set of rules has a nice structure, that we are able to describe only in part.

We remark that the idea of considering this sort of parking functions has been suggested by \cite{CCHJR}, and specifically by Chapter 1.5b. A specific case is then also proposed in Chapter 1.13, though without providing a general framework for the investigation we carry out here. 

After two brief sections on preliminaries (Section \ref{preliminaries}) and main definitions (Section \ref{main}), we introduce the (finite) lattice of parking strategies in Section \ref{lattice} and prove the crucial fact that the set of successful parking strategies for a given preference is an up-set in such a lattice. The rest of the paper is essentially devoted to understand these up-sets, by investigating their minimal elements. The problem of fully describing such minimal elements appears to be extremely challenging, and we are able to provide only partial results. In particular, in Section \ref{min_back_steps} we completely characterize the minimal elements which minimize the number of backward steps, while in Section \ref{min_cars_back} we do the same for minimal elements which minimize the number of cars driving backwards. Section \ref{zeroone} is devoted to the complete analysis of the special case in which cars can take at most one backward step. Finally, Section \ref{further} contains some hints for further investigations.

The main tools we have used to obtain our results are some notions we have introduced in our previous paper \cite{FV}, in particular the \emph{excess function} and the \emph{excess set} of a parking preference. These are defined in Section \ref{preliminaries}, and identify those positions $j$ that are \emph{critical} for a given parking preference, meaning that there are too many cars having preference greater than or equal to $j$, thus preventing the preference from being a classical parking function (but not a Naples parking function). Section \ref{preliminaries} also contains the main properties of the excess function and the excess set, whose complete proofs can be found in \cite{FV}.

\section{Preliminaries}\label{preliminaries}

A \emph{parking preference} $\alpha =(a_1 ,a_2 ,\ldots ,a_n)$ of length $n$ is a $n$-tuple of positive integers between 1 and $n$ to be interpreted as follows: for an ordered list $(c_1 ,c_2 ,\ldots ,c_n )$ of $n$ cars attempting to park on $n$ numbered spots in a street, $a_i$ denotes the preferred spot for car $c_i$. The set of all parking preferences of length $n$ will be denoted $PP_n$. The total number of cars having preference $i$ will be denoted with $|\alpha |_i$. The \emph{classical} parking rule requires that each car tries to park into its preferred spot and, if this is already occupied, then proceeds forward and parks into the first available spot, if any, otherwise it simply exits the street (thus failing to park). If all cars successfully park using the above rule, then we say that $\alpha$ is a \emph{parking function} of length $n$, and we denote the set of all those with $PF_n$. An elegant characterization of parking functions states that $\alpha$ is a parking function if and only if $\sum_{i=1}^{j}|\alpha |_i \geq j$, for all $j\leq n$. The latter condition is equivalent to the fact that the nondecreasing rearrangement $(b_1 ,b_2 ,\ldots ,b_n)$ of $\alpha$ is such that $b_j \leq j$, for all $j\leq n$. As a consequence, we get that any rearrangement of a parking function is a parking function as well.

The classical parking rule can be modified into the \emph{$k$-Naples parking rule}, by allowing each car to check up to $k$ spots before its preferred one, and then proceed forward if it does not find an available place. A \emph{$k$-Naples parking function} is a parking preference such that all cars manage to park following the $k$-Naples parking rule, and the set of $k$-Naples parking functions of length~$n$ will be denoted $PF_{n,k}$. We remark that, unlike classical parking functions, a rearrangement of a Naples parking function is not a Naples parking function in general: for example, $(3,3,2)\in PF_{3,1}$, but $(2,3,3)\notin PF_{3,1}$. 

\bigskip

Next we recall some notions and results described in \cite{FV}. They are all crucial for the rest of the paper.

Given $\alpha \in PP_n$, we denote with $u_{\alpha}(j)\in \mathbb{Z}$ the excess of cars having preference at least $j$ with respect to the number of available spots from $j$ onwards (for $1\leq j\leq n$). More formally,
\[
	u_{\alpha}(j)=\sum_{i=j}^{n}|\alpha|_i -(n-j+1).
\]

The map $u_{\alpha}$ is called the \emph{excess function} of $\alpha$, and it is easy to show that $\alpha \in PF_n$ if and only if $u_{\alpha}(j)\leq 0$, for all $j$. The set $U_{\alpha}=\{ 1\leq j\leq n\, |\, u_{\alpha}(j)\geq 1 \}$ is the \emph{excess set} of $\alpha$, and 
records those positions which prevent $\alpha$ from being a parking function (but not a Naples parking function). Notice that, since $\sum_{i=1}^{n}|\alpha|_i =n$, we have $u_{\alpha}(1)=0$, hence $1\notin U_{\alpha}$.

The next couple of lemmas (whose proofs are rather straightforward) record some basic properties of the excess function, and can be found in \cite{FV}.

\begin{lemma}\label{elementary_properties}
	Let $\alpha \in PP_n$. For all $1\leq j,j_1 ,j_2 \leq n$, with $j_1 <j_2$, we have: 
	\begin{itemize}
		\item[(i)] $u_\alpha (j)=j-1-\sum_{i=1}^{j-1}|\alpha |_i$;
		\item[(ii)] $u_\alpha (j_2 )-u_\alpha (j_1 )=j_2 -j_1 -\sum_{i=j_1 }^{j_2 -1}|\alpha |_i$;
		\item[(iii)] $u_\alpha (j)=u_\alpha (j+1)+|\alpha |_j -1$, when $j<n$;
		\item[(iv)] $u_\alpha (j)<j$.
	\end{itemize} 
\end{lemma}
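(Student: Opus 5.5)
All four items follow directly from the definition $u_\alpha(j)=\sum_{i=j}^{n}|\alpha|_i-(n-j+1)$ together with the identity $\sum_{i=1}^{n}|\alpha|_i=n$, which holds because each of the $n$ cars has exactly one preference in $\{1,\dots,n\}$. I will prove (i) first and derive the others from it.

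For (i), split the total count as $\sum_{i=j}^{n}|\alpha|_i = n-\sum_{i=1}^{j-1}|\alpha|_i$. Substituting into the definition gives
\[
u_\alpha(j)=n-\sum_{i=1}^{j-1}|\alpha|_i-(n-j+1)=j-1-\sum_{i=1}^{j-1}|\alpha|_i .
\]
For (ii), subtract the expression in (i) at $j_1$ from the one at $j_2$. The terms $\sum_{i=1}^{j_1-1}|\alpha|_i$ cancel, leaving
\[
u_\alpha(j_2)-u_\alpha(j_1)=j_2-j_1-\sum_{i=j_1}^{j_2-1}|\alpha|_i .
\]
Item (iii) is the special case $j_1=j$, $j_2=j+1$ of (ii), which gives $u_\alpha(j+1)-u_\alpha(j)=1-|\alpha|_j$; rearranging yields the claim.

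For (iv), use (i) and the nonnegativity of the counts: $u_\alpha(j)\le j-1<j$. The only point needing care is the empty-sum convention at $j=1$, where (i) gives $u_\alpha(1)=0$, consistent with the remark in the text. No step is a genuine obstacle; the whole argument is bookkeeping with the single identity $\sum_{i=1}^{n}|\alpha|_i=n$.
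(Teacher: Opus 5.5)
Your proof is correct. The paper gives no proof of this lemma itself; it calls the proofs straightforward and refers to the authors' earlier paper, and your derivation from the definition together with $\sum_{i=1}^{n}|\alpha|_i=n$ is exactly that routine argument, with (ii)--(iv) following from (i).
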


\begin{lemma}\label{elementary_intervals}
	Let $\alpha \in PP_n$ and let $[p,q]$ be a maximal interval of consecutive positions contained in $U_\alpha$. Then we have: 
	\begin{itemize}
		\item[(i)] $u_\alpha (p)=1$;
		\item[(ii)] $u_\alpha (p-1)=0$;
		\item[(iii)] $|\alpha |_{p-1}=0$;
		\item[(iv)] $|\alpha |_q \geq 2$.
	\end{itemize}     
\end{lemma}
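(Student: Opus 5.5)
The plan is to read all four items off the recurrence in Lemma~\ref{elementary_properties}(iii), $u_\alpha(j)=u_\alpha(j+1)+|\alpha|_j-1$, together with maximality of $[p,q]$. Maximality means $p-1\notin U_\alpha$ and, if $q<n$, $q+1\notin U_\alpha$. Since $1\notin U_\alpha$ (because $u_\alpha(1)=0$), we have $p\geq 2$, so $p-1$ is a valid position. Hence $u_\alpha(p-1)\leq 0$, while $u_\alpha(p)\geq 1$.

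For (i)--(iii), apply (iii) at $j=p-1$:
\[
u_\alpha(p-1)=u_\alpha(p)+|\alpha|_{p-1}-1\geq 1+0-1=0 .
\]
Combined with $u_\alpha(p-1)\leq 0$, this forces equality throughout. Thus $u_\alpha(p-1)=0$, $u_\alpha(p)=1$ and $|\alpha|_{p-1}=0$, which gives (ii), (i) and (iii) at once.

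For (iv), split into two cases. If $q<n$, apply (iii) at $j=q$:
\[
u_\alpha(q)=u_\alpha(q+1)+|\alpha|_q-1 .
\]
Here $u_\alpha(q)\geq 1$ and $u_\alpha(q+1)\leq 0$, so $|\alpha|_q-1\geq 1$, i.e. $|\alpha|_q\geq 2$. If $q=n$, the definition gives directly $u_\alpha(n)=|\alpha|_n-1$, and $u_\alpha(n)\geq 1$ again gives $|\alpha|_n\geq 2$.

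The only delicate points are the boundary issues: that $p-1\geq 1$ exists, which is guaranteed by $u_\alpha(1)=0$, and the separate treatment of $q=n$, where the recurrence is not available. Neither is a real obstacle.
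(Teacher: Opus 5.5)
Your proof is correct and complete. The paper omits this proof as straightforward (deferring to \cite{FV}), but your argument via the recurrence in Lemma~\ref{elementary_properties}(iii) plus maximality, including the boundary checks $p\geq 2$ (from $u_\alpha(1)=0$) and the separate case $q=n$, is the same reasoning the paper uses inline in the proof of Theorem~\ref{minimize_back_steps} to derive $|\alpha|_q\geq u_\alpha(q)+1$ from $u_\alpha(q+1)\leq 0$.
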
 

The set of $k$-Naples parking functions cannot be characterized in terms of the excess function only, since the excess function does not depend on the order of the cars, however the following proposition (again to be found in \cite{FV}) gives some useful information.

\begin{proposition}\label{naples_excess}
	If $\alpha \in PF_{n,k}$, then $u_\alpha (j)\leq k$, for all $1\leq j\leq n$.
\end{proposition}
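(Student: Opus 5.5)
We argue by contradiction: fix $\alpha\in PF_{n,k}$ and suppose some position $j$ has $u_\alpha(j)\geq k+1$. Since $u_\alpha(1)=0$, we have $j\geq 2$. We will show that the spots $j,j+1,\ldots,n$ cannot absorb all the cars that prefer them, which contradicts the fact that every car parks.

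The key step is a bound on how many cars with preference at least $j$ can end up parked strictly before $j$. Under the $k$-Naples rule, a car with preference $a\geq j$ parks strictly before $j$ only if it moves backwards. It first checks $a-1,a-2,\ldots,a-k$ (staying at or after $1$), and it can land below $j$ only when $a-k\leq j-1$, i.e.\ $a\leq j+k-1$. In that case it parks in one of the spots $\max(1,a-k),\ldots,j-1$, all of which lie in the window $W=[\max(1,j-k),\,j-1]$. Each spot holds at most one car, so at most $|W|\leq k$ cars with preference $\geq j$ park in $[1,j-1]$.

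We then count. There are $\sum_{i=j}^n|\alpha|_i=(n-j+1)+u_\alpha(j)\geq (n-j+1)+k+1$ cars with preference $\geq j$. Forward moves never take a car below its preferred spot, so each of these cars parks either in $[j,n]$ or in $W$. That gives at most $(n-j+1)+k$ available spots, fewer than the number of cars. Hence some car fails to park, contradicting $\alpha\in PF_{n,k}$. The conclusion is $u_\alpha(j)\leq k$ for all $j$.

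The main obstacle is to state the "landing region" claim precisely. A car with preference $a\geq j$ always ends in $[\max(1,a-k),n]$, because the backward search goes at most $k$ steps and the forward search starts at $a$. Any spot it occupies below $j$ is therefore at least $a-k\geq j-k$. This holds regardless of the order of the cars, which is why the excess function, though order-independent, still gives this necessary condition.
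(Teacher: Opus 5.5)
Your proof is correct and complete. The landing-region claim is the heart of it, and you justify it properly: under the $k$-Naples rule a car with preference $a$ either parks in $[\max(1,a-k),a]$ during the backward search or strictly after $a$. The cars with preference at least $j$ are therefore confined to $[\max(1,j-k),n]$, a set of at most $(n-j+1)+k$ spots, and the pigeonhole count closes the argument.

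Note that the paper does not prove this proposition at all. It quotes the result from \cite{FV}, so there is no in-text proof to compare your route with. The closest in-paper reasoning is the proof of Lemma~\ref{ineq}, which observes that at least $u_\alpha(j)$ cars with preference at least $j$ must park in $[1,j-1]$. Your argument is that same count plus one extra observation: those cars can only land in a window of $\min(j-1,k)\le k$ spots.

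A side benefit of your formulation is that it uses only the fact that every car has at most $k$ allowed backward steps. It therefore applies verbatim to the paper's $\mathbf{k}_n$, whose entries are $\min(i-1,k)$. More generally, it shows that $\alpha\in PF_{n,\rho}$ implies $u_\alpha(j)\le \max_i r_i$ for every $j$.
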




We conclude this section with a few basic definitions and notations concerning posets.

Given a poset $(P,\leq )$, a subset $F\subseteq P$ is called an \emph{up-set} when, for all $x\in F$ and $y\in P$, if $x\leq y$ then $y\in F$ (in other words, $F$ is closed upwards). Given a lattice $L$, an up-set $F$ of $L$ is said to be a \emph{filter} when, for all $x,y\in L$, $x\wedge y\in L$. For any subset $Q\subset L$, we denote with~$\uparrow \! Q$ the filter \emph{generated by} $Q$, which is by definition the smallest filter of $L$ containing~$Q$. In particular, when $Q=\{ x\}$, this is called the \emph{principal filter} generated by $x$, and will be simply denoted $\uparrow \! x$. If the lattice $L$ is finite, then every filter of $L$ is principal (of course this is no longer true for generic up-sets).

\section{$\rho$-Naples parking functions}\label{main}

Denote with $\Delta_n$ the set of all subdiagonal sequences of natural numbers of length $n$, i.e. $\Delta_n =\{ (r_1 ,r_2 ,\ldots ,r_n )\in \mathbb{N}^n \, |\, r_i <i, \textnormal{ for all $i\leq n$} \}$. The elements of $\Delta_n$ will be called \emph{parking strategies} (of length $n$).
Let $\alpha =(a_1,a_2 ,\ldots ,a_n )\in PP_n$ and $\rho =(r_1 ,r_2 ,\ldots ,r_n)\in \Delta_n$. We define the \emph{$\rho$-Naples parking rule} by imposing that car $c_i$ follows the $r_i$-Naples parking rule, for all $i\leq n$. We say that $\alpha$ is a \emph{$\rho$-Naples parking function} when all cars are able to park under the $\rho$-Naples parking rule. The set of $\rho$-Naples parking functions of length $n$ will be denoted~$PF_{n,\rho}$. For any parking preference $\alpha$, we denote with $\psi_{\rho}(\alpha )=(\psi_{\rho}^{(\alpha )}(1),\psi_{\rho}^{(\alpha )}(2),\ldots ,\psi_{\rho}^{(\alpha )}(n))$ the \emph{outcome map} of $\alpha$ with respect to the parking strategy $\rho$, which is defined by setting $\psi_{\rho}^{(\alpha )}(i)$ to be the actual spot in which $c_i$ parks following the $\rho$-Naples parking rule (with the convention that $\psi_{\rho}^{(\alpha )}(i)=\infty$ if $c_i$ is unable to park).

The notion of $\rho$-Naples parking function generalizes that of classical parking function, and more generally that of $k$-Naples parking function. Indeed, setting $\mathbf{0}_n=(\underbrace{0,\ldots ,0}_{n})$,
a $\mathbf{0}_n$-Naples parking function corresponds to a classical parking function, whereas, for $k<n$, if we set $\mathbf{k}_n =(0,1,\ldots ,k-1,k,k,\ldots k)$
, a $\mathbf{k}_n$-Naples parking function is precisely a $k$-Naples parking function. We also notice that every parking preference of length $n$ is a $\mathbf{(n-1)}_n$-Naples parking function.

\bigskip

As an example, consider the parking preference $\alpha =(4,3,3,4,1)$. We observe that $\alpha \notin PF_{5,\mathbf{0}_n} = PF_5$, whereas $\alpha \in PF_{5,\mathbf{k}_n}$, for all $0<k<n$. Moreover the reader can easily check that, for instance, $\alpha \notin PF_{5,11011}$, whereas $\alpha \in PF_{5,00100}$ (where, for simplicity, we have represented $\rho$ as the string of its entries; we will do the same in the rest of the paper, unless explicitly stated otherwise). More specifically, the outcome maps of $\alpha$ with respect to the two above parking rules are $\psi_{11011}(\alpha )=(4,3,5,\infty ,1)$ and $\psi_{00100}(\alpha )=(4,3,2,5,1)$, respectively. 

\section{The lattice of parking strategies}\label{lattice}

The next proposition is one of the key result of our paper. Suppose that $\alpha$ is a $\rho$-Naples parking function, with $\rho=r_1 r_2 \cdots r_n$, and let $\rho' =r'_1 r'_2 \cdots r'_n$ be such that $r_i \leq r'_i$, for all $i\leq n$. It looks natural to infer that we should have that $\alpha$ is also a $\rho'$-Naples parking function, since we are just allowing cars to check more backward spots. However, this is at least not trivial as it seems since, for instance, the outcome maps $\psi_{\rho}$ and $\psi_{\rho'}$ are not necessarily equal (for example, take again the above considered parking preference $\alpha =(4,3,3,4,1)$ and the two parking strategies $\rho =00120$ and $\rho' =00130$). Luckily, the next proposition shows that the above observation is not an impediment to the intuitive fact that the property of being a $\rho$-parking function is preserved by replacing $\rho$ with a componentwise larger parking strategy. As we will see in a moment, the proof is however not at all trivial. We give a rough illustration of the main argument of the proof in Figure \ref{up-set}.

\begin{proposition}\label{upset}
	Let $\alpha =(a_1 ,a_2 ,\ldots ,a_n ) \in PF_{n,\rho}$, with $\rho=r_1 r_2 \cdots r_n$, and let $\rho' =r'_1 r'_2 \cdots r'_n$ be such that $r_i \leq r'_i$, for all $i\leq n$. Then $\alpha \in PF_{n,\rho'}$.
\end{proposition}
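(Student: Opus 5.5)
The plan is to reduce to an elementary step and then compare two parking processes run side by side. The componentwise order on $\Delta_n$ is generated by covering steps that raise a single entry by one, so it suffices to treat the case in which $\rho'$ and $\rho$ differ only at one car $c_i$, with $r'_i=r_i+1$; the general statement then follows by chaining such steps from $\rho$ up to $\rho'$. (Raising one entry by one keeps us inside $\Delta_n$ as long as we stay below $\rho'$.)

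Fix that $i$ and run the two processes in parallel.

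\textbf{Cars before $c_i$.} Cars $c_1,\dots,c_{i-1}$ behave identically under $\rho$ and $\rho'$, so the two sets of occupied spots agree at that point.

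\textbf{Car $c_i$.} Either $c_i$ parks in the same spot under both strategies, in which case the two processes coincide from then on and we are done, or it does not. The latter happens exactly when $a_i$ and the spots $a_i-1,\dots,a_i-r_i$ are all occupied while $q=a_i-r_i-1$ is free. Under $\rho'$ the car then parks at $q$. Under $\rho$ it parks at some $p>a_i>q$, because $c_i$ succeeds under $\rho$ by hypothesis, being one of the cars of a $\rho$-Naples parking function.

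\textbf{Invariant for the later cars.} Let $S_t$ and $S'_t$ be the sets of occupied spots after $t$ cars under $\rho$ and $\rho'$. The invariant I would carry is a dominance condition on tails:
\[
|S'_t\cap[j,n]|\le |S_t\cap[j,n]| \qquad\text{for all } 1\le j\le n,
\]
together with the fact that both processes have parked the same number of cars, so $|S'_t|=|S_t|$. Just after $c_i$ the two sets differ by a single swap $p\mapsto q$ with $q<p$, so the invariant holds at that moment.

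\textbf{Why the invariant gives the result.} A car $c_j$ with $j>i$ fails under the $\rho$-Naples rule precisely when every spot in $[a_j-r_j,n]$ is occupied, because it checks $a_j$, then $a_j-1,\dots,a_j-r_j$, then $a_j+1,\dots,n$. Suppose $c_j$ fails under $\rho'$, where it uses the same $r_j$. Then $[a_j-r_j,n]\subseteq S'_{j-1}$, and dominance forces $[a_j-r_j,n]\subseteq S_{j-1}$. So $c_j$ would also fail under $\rho$, contradicting $\alpha\in PF_{n,\rho}$.

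\textbf{Preserving the invariant (the main obstacle).} One has to show that if dominance holds before car $c_j$ and $c_j$ parks at $s$ under $\rho$, then it parks at some $s'$ under $\rho'$ and dominance still holds afterwards. This amounts to checking, for each $j$ in the relevant range, that the tail counts still compare correctly after the car is added. I expect a case analysis on where $s$ and $s'$ lie in the car's search order $a_j,\ a_j-1,\dots,a_j-r_j,\ a_j+1,\dots,n$: in the backward window or the forward part, and which of the two comes first.

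The key observation to try first: the cars' search orders are identical in the two processes, and the first free spot in that order under $\rho'$ cannot lie strictly beyond (in tail terms) what dominance permits relative to $s$. Concretely, if $s'>s$ then every spot of $[s,s')$ examined by the car is occupied in $S'$. Comparing tail counts at $j=s$ and $j=s'$ should then give the needed slack. The delicate subcase is when $s$ or $s'$ lies in the backward window, where the search order is decreasing rather than increasing; this is where I would work hardest. If it proves awkward, I would fall back on tracking explicitly the symmetric difference $S_t\,\triangle\,S'_t$, which remains a single pair of spots until the processes merge, and checking directly that its $\rho'$-side element never exceeds its $\rho$-side element.
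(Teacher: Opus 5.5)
Your route differs from the paper's and it is sound. As written, though, the step you call the main obstacle (that the invariant survives each later car) is only announced: ``I expect a case analysis'', ``should then give the needed slack''. That step is the whole content of the proof.

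It does close, most easily through your fallback. Since $\rho$ and $\rho'$ differ only at $c_i$, after $c_i$ the two configurations are either equal (and then stay equal) or of the form $S=X\cup\{x\}$, $S'=X\cup\{x'\}$ with $x'<x$. For a single swap this is exactly your tail-dominance condition. Let $F$ be the set of spots free in both. Let $y$ be the first element of $F\cup\{x,x'\}$ in the next car's search order $a,a-1,\dots,a-r,a+1,\dots,n$; it exists because the car parks under $\rho$. There are three cases.

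\textbf{Case $y\in F$.} The car parks at $y$ in both processes, and the pair is unchanged.

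\textbf{Case $y=x'$.} Then $x>a$, since otherwise $x>x'$ would come earlier in the decreasing backward part. So $x$ lies in the forward part after $x'$. Under $\rho'$ the car parks at the first $z\in F\cup\{x\}$ after $x'$, which exists. Every spot preceding $x$ in the search order is smaller than $x$. Hence either the processes merge ($z=x$) or the new pair is $(x,z)$ with $z<x$.

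\textbf{Case $y=x$.} Under $\rho$ the car parks at the first $z\in F\cup\{x'\}$ after $x$. Suppose $z\neq x'$ and $z<x'$. Then $z$ cannot lie in the forward part, since then $x>z>a$ and $z$ would precede $x$. So $a-r\le z\le a$. If $x'\le a$, then $x'$ sits in the backward part after $x$ and before $z$, contradicting the choice of $z$. If $x'>a$, then $x>x'>a$ and $x'$ precedes $x$, contradicting $y=x$. Hence either $z=x'$ (merge) or the new pair $(z,x')$ satisfies $x'<z$.

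This also shows directly that no car fails under $\rho'$, because when $y=x'$ the spot $x$ is still available. Your separate failure argument is correct but becomes unnecessary.

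Your primary plan with general tail counts also works, with one correction. When $s$ and $s'$ both lie in the backward window with $s'>s$, the spots of $[s,s')$ are never examined under $\rho'$. There you need dominance at $a+1$ together with $[s+1,a]\subseteq S$ and $[s'+1,a]\subseteq S'$, not the occupancy of $[s,s')$.

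Compared with the paper: the paper treats arbitrary $\rho\le\rho'$ at once. It proves by induction on $h$ that $\psi_{\rho'}^{(\alpha)}(h)\ge\psi_{\rho}^{(\alpha)}(h)$ forces $\psi_{\rho'}^{(\alpha)}(h)\in\{\psi_{\rho}^{(\alpha)}(1),\dots,\psi_{\rho}^{(\alpha)}(h)\}$. Because the two outcome maps may differ at many cars simultaneously, it must chase a chain $c_h,c_{j_1},c_{j_2},\dots$ and close it with a finiteness remark. Your reduction to a covering step keeps the discrepancy to a single pair at all times. That turns the proof into a short, fully checkable three-case coupling. It also gives a structural by-product: by transitivity of tail dominance, for any $\rho\le\rho'$ the $\rho'$-configuration is, at every time, tail-dominated by the $\rho$-configuration. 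The paper's claim, in exchange, gives car-by-car information about the outcome maps for arbitrary $\rho\le\rho'$ without chaining.
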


\begin{proof}
	Since we have a single parking preference for a certain set of cars and two distinct parking strategies associated with it, we will briefly refer to the two (parking) problems as problem $\rho$ and problem $\rho'$.
	
	We claim that, for all $h\leq n$, if $\psi_{\rho'}^{(\alpha )}(h)\geq \psi_{\rho}^{(\alpha )}(h)$, then $\psi_{\rho'}^{(\alpha )}(h)\in \{ \psi_{\rho}^{(\alpha )}(1),\psi_{\rho}^{(\alpha )}(2),\ldots ,\psi_{\rho}^{(\alpha )}(h) \}$. Notice that this is stronger than what we actually need to prove. In fact, our goal is just to show that, for all $h\leq n$, $\psi_{\rho'}^{(\alpha )}(h)<\infty$, which is clearly implied by the above claim (since $\alpha \in PF_{n,\rho }$, and so $\infty \notin \{ \psi_{\rho}^{(\alpha )}(1),\psi_{\rho}^{(\alpha )}(2),\ldots ,\psi_{\rho}^{(\alpha )}(h) \}$).
	
	We argue by induction on $h$ and begin by observing that of course $\psi_{\rho'}^{(\alpha )}(1)=\psi_{\rho}^{(\alpha )}(1)$. Now assume that the claim holds for all $i<h$, and suppose that $\psi_{\rho'}^{(\alpha )}(h)\geq \psi_{\rho}^{(\alpha )}(h)$. If $\psi_{\rho'}^{(\alpha )}(h)=\psi_{\rho}^{(\alpha )}(h)$ the claim is clearly true. So suppose (for the rest of the proof) that $\psi_{\rho'}^{(\alpha )}(h)>\psi_{\rho}^{(\alpha )}(h)$. Equivalently, this means that in problem $\rho'$ car $c_h$ either parks 
	after the spot occupied by car $c_h$ in problem $\rho$, or it cannot park at all (i.e. $\psi_{\rho'}^{(\alpha )}(h)=\infty$).  
	
	If $a_h \geq \psi_{\rho'}^{(\alpha )}(h)$, then, in problem $\rho$, car $c_h$ must have checked spot $\psi_{\rho'}^{(\alpha )}(h)$ (by driving backwards from spot $a_h$ to reach spot $\psi_{\rho}^{(\alpha )}(h)$), finding it occupied. This means that $\psi_{\rho'}^{(\alpha )}(h)\in \{ \psi_{\rho}^{(\alpha )}(1),\psi_{\rho}^{(\alpha )}(2),\ldots ,\psi_{\rho}^{(\alpha )}(h) \}$, which is our claim.
	
	Otherwise, suppose that $a_h <\psi_{\rho'}^{(\alpha )}(h)$. We observe that, in problem $\rho'$, car $c_h$ must have checked spot $\psi_{\rho}^{(\alpha )}(h)$, finding it occupied. Indeed, if $a_h \leq \psi_{\rho}^{(\alpha )}(h)$, then, in problem $\rho'$, spot $\psi_{\rho}^{(\alpha )}(h)$ is between the preferred spot for $c_h$ (which is $a_h$) and the actual spot in which $c_h$ parks (which is $\psi_{\rho'}^{(\alpha )}(h)$), so $c_h$ (which clearly parks driving forward) has found spot $\psi_{\rho}^{(\alpha )}(h)$ already occupied. 	
	On the other hand, if $\psi_{\rho}^{(\alpha )}(h)<a_h <\psi_{\rho'}^{(\alpha )}(h)$, then in problem $\rho$, car $c_h$ parks in spot $\psi_{\rho}^{(\alpha )}(h)$ by driving backward from $a_h$. Since, in problem $\rho'$, car $c_h$ instead parks in $\psi_{\rho'}^{(\alpha )}(h)$ by driving forward from $a_h$, and we know that $r'_h \geq r_h$, then certainly $c_h$ has checked spot $\psi_{\rho}^{(\alpha )}(h)$ (when looking for a spot by driving backward), finding it occupied. Thus, in all cases, we can conclude that $\psi_{\rho}^{(\alpha )}(h)\in \{ \psi_{\rho'}^{(\alpha )}(1),\psi_{\rho'}^{(\alpha )}(2),\ldots ,\psi_{\rho'}^{(\alpha )}(h-1) \}$, i.e., there exists $j_1 <h$ such that $\psi_{\rho}^{(\alpha )}(h)=\psi_{\rho'}^{(\alpha )}(j_1 )$.
	
	Now observe that, since in problem $\rho$ car $c_h$ parks in $\psi_{\rho}^{(\alpha )}(h)=\psi_{\rho'}^{(\alpha )}(j_1 )$, then necessarily $\psi_{\rho'}^{(\alpha )}(j_1 )\notin \{ \psi_{\rho}^{(\alpha )}(1),\psi_{\rho}^{(\alpha )}(2),\ldots ,\psi_{\rho}^{(\alpha )}(h-1) \} \supseteq \{ \psi_{\rho}^{(\alpha )}(1),\psi_{\rho}^{(\alpha )}(2),\ldots ,\psi_{\rho}^{(\alpha )}(j_1 ) \}$, hence (by the inductive hypothesis) $\psi_{\rho'}^{(\alpha )}(j_1 )<\psi_{\rho}^{(\alpha )}(j_1 )$.
	
	If $\psi_{\rho}^{(\alpha )}(j_1 )=\psi_{\rho'}^{(\alpha )}(h)$, then clearly our claim follows.
	
	If $\psi_{\rho}^{(\alpha )}(j_1 )>\psi_{\rho'}^{(\alpha )}(h)$, we again distinguish two cases, depending on the relative positions of $a_{j_1}$ and $\psi_{\rho'}^{(\alpha )}(h)$. If $\psi_{\rho'}^{(\alpha )}(h)<a_{j_1}$, then we have that $\psi_{\rho '}^{(\alpha )}(j_1 )=\psi_{\rho}^{(\alpha )}(h)<\psi_{\rho '}^{(\alpha )}(h)<a_{j_1 }$, and so in problem $\rho'$ car $c_{j_1}$ would find spot $\psi_{\rho'}^{(\alpha )}(h)$ already occupied 
	(when driving backwards towards spot $\psi_{\rho'}^{(\alpha )}(j_1 )$), which is impossible, since $j_1 <h$. If instead $a_{j_1}\leq \psi_{\rho'}^{(\alpha )}(h)$, then in problem $\rho$ car $c_{j_1}$ finds spot $\psi_{\rho'}^{(\alpha )}(h)$ already occupied (when driving forward towards spot $\psi_{\rho}^{(\alpha )}(j_1 )$), hence $\psi_{\rho'}^{(\alpha )}(h)\in \{ \psi_{\rho}^{(\alpha )}(1),\psi_{\rho}^{(\alpha )}(2),\ldots ,\psi_{\rho}^{(\alpha )}(j_1 -1) \} \subseteq \{ \psi_{\rho}^{(\alpha )}(1),\psi_{\rho}^{(\alpha )}(2),\ldots ,\psi_{\rho}^{(\alpha )}(h) \}$, which is our claim.
	
	If $\psi_{\rho'}^{(\alpha )}(h)>\psi_{\rho}^{(\alpha )}(j_1 )$, then we can argue as above, with car $c_{j_1}$ playing the role of car $c_h$ in problem $\rho$ (specifically, we can repeat the whole argument starting from the assumption $\psi_{\rho'}^{(\alpha )}(h)>\psi_{\rho}^{(\alpha )}(h)$ made in the third paragraph of the present proof, and replacing $\psi_{\rho}^{(\alpha )}(h)$ with $\psi_{\rho}^{(\alpha )}(j_1 )$). In such a way, we get that, if $a_{j_1}\geq \psi_{\rho'}^{(\alpha )}(h)$, then our claim holds. Otherwise, i.e., if $a_{j_1}<\psi_{\rho'}^{(\alpha )}(h)$, then in problem $\rho'$ car $c_h$ must have checked spot $\psi_{\rho}^{(\alpha )}(j_1 )$, finding it occupied, hence $\psi_{\rho}^{(\alpha )}(j_1 )\in \{ \psi_{\rho'}^{(\alpha )}(1),\psi_{\rho'}^{(\alpha )}(2),\ldots ,\psi_{\rho'}^{(\alpha )}(h-1) \}$. In other words, there exists $j_2 <h$ such that $\psi_{\rho}^{(\alpha )}(j_1 )=\psi_{\rho'}^{(\alpha )}(j_2 )$.
	
	Now, since the total number of spots (and of cars) is finite, after a finite number of iterations of the previous argument we will find an index $k<h$ such that $\psi_{\rho'}^{(\alpha )}(h)\leq \psi_{\rho}^{(\alpha )}(k)$, which proves our claim (by referring to the appropriate part of the above argument).
\end{proof}

\begin{figure}[t]
	\begin{center}
		\includegraphics[scale=0.6]{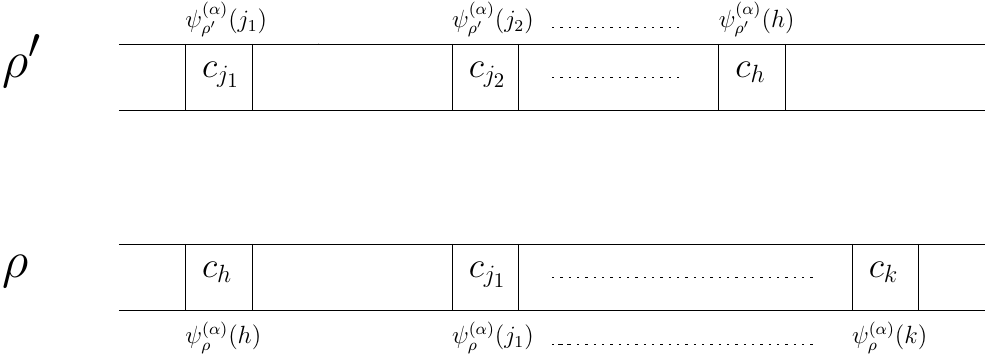}
		\caption{Notations are the same as in the proof of Proposition \ref{upset}. Two copies of the same road are displayed, illustrating the main recursive argument of the proof.}
		\label{up-set}
	\end{center}
\end{figure}

The result obtained in the above proposition can be naturally interpreted in an order-theoretic perspective. The set $\Delta_n \subseteq \mathbb{N}^n$ of parking strategies of length $n$ has a natural poset structure, which comes from taking the componentwise order of its elements. Specifically, given $\rho ,\rho' \in \Delta_n$, with $\rho=r_1 r_2 \cdots r_n$ and $\rho' =r'_1 r'_2 \cdots r'_n$, we set $\rho \leq \rho'$ whenever $r_i \leq r'_i$, for all $i\leq n$. Endowed with this partial order, the poset $\Delta_n$ is in fact a (finite) lattice, and more precisely it is the product of chains $\{ 0\}\times \{ 1\}\times \cdots \times \{ n-1\}$, where $\{ i\} =\{ 0,1,\ldots ,i\}$ 
with the natural total order. As such, the lattice $\Delta_n$ is distributive, hence ranked, and the rank of an element $\rho=r_1 r_2 \cdots r_n$ is given by $|\rho |=\sum_{i=1}^{n}r_i$. It is immediate to realize that the minimum of $\Delta_n$ is $\mathbf{0}_n$, whereas its maximum is $\mathbf{(n-1)}_n$. This lattice will be called the \emph{lattice of parking strategies} (of order $n$).

In terms of the poset structure of $\Delta_n$, Proposition \ref{upset} says that, given a parking preference $\alpha \in PP_n$, the set of all parking strategies $\rho$ such that $\alpha \in PF_{n,\rho}$ is an up-set $\mathcal{F}_{\alpha}$ of $\Delta_n$. In such a case, we will say that $\mathcal{F}_{\alpha}$ is \emph{realized by} $\alpha$, or that $\alpha$ \emph{induces} the up-set $\mathcal{F}_{\alpha}$. Moreover, an up-set $\mathcal{F}$ of the lattice of parking preferences is said to be a \emph{realizer} when there exists a parking preference $\alpha$ such that $\mathcal{F}=\mathcal{F}_{\alpha}$. In particular, it is clear that $\alpha \in PF_n$ if and only if $\mathcal{F}_{\alpha}=\Delta_n$. A natural and interesting question is then to understand which up-sets of $\Delta_n$ are actually realizers. This problem remains open in its full generality. Below we will give some partial results that begin to shed some light on this intriguing question.

Our first achievement is that at least the filters (i.e. principal up-sets) of $\Delta_n$ are realizers.

\begin{theorem}\label{filter}
	Let $\rho \in \Delta_n$. Then there exists $\alpha \in PP_n$ such that $\mathcal{F}_{\alpha}=\uparrow \! \rho$.  
\end{theorem}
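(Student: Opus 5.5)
The plan is to construct $\alpha$ explicitly. Each car $c_i$ with $r_i>0$ should, at the moment it arrives, find the nearest free spot exactly $r_i$ steps behind its preference, with everything from its preference to the end of the street already full. Cars with $r_i=0$ should find their preferred spot free. The simplest way I expect to achieve this is to fill the street from right to left, so that car $c_i$ is meant to park in spot $p_i=n-i+1$. Accordingly, I would set
\[
a_i = n-i+1+r_i, \qquad 1\le i\le n .
\]
Since $0\le r_i<i$, we get $n-i+1\le a_i\le n$, so $\alpha\in PP_n$. For $\rho=\mathbf{0}_n$ this gives $\alpha=(n,n-1,\ldots,1)\in PF_n$, consistent with $\uparrow\!\mathbf{0}_n=\Delta_n$.

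First I would show that $\alpha\in PF_{n,\sigma}$ for every $\sigma=s_1\cdots s_n\ge\rho$, and in fact that $\psi_{\sigma}^{(\alpha)}(i)=n-i+1$ for all $i$. The argument is by induction on $i$. Suppose cars $c_1,\ldots,c_{i-1}$ occupy exactly spots $n-i+2,\ldots,n$. Then spot $a_i$ is occupied if $r_i>0$ and free if $r_i=0$. In the first case, the backward spots $a_i-1,\ldots,a_i-r_i+1$ all lie in $[n-i+2,n]$ and so are occupied. The spot $a_i-r_i=n-i+1$ is free and within reach, because $s_i\ge r_i$. Since backward checking stops at the first free spot, car $c_i$ parks in $n-i+1$. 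Note that any extra backward steps allowed by $s_i>r_i$ are never used, which is why the outcome does not depend on $\sigma$. In the second case car $c_i$ parks at $a_i=n-i+1$ directly. By Proposition \ref{upset} it would even suffice to treat $\sigma=\rho$ and then invoke monotonicity, but the direct induction costs nothing extra.

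Next I would show that $\alpha\notin PF_{n,\sigma}$ whenever $\sigma\not\ge\rho$. Let $i$ be the smallest index with $s_i<r_i$, so $r_i>0$. For $j<i$ we have $s_j\ge r_j$, so the induction above applies verbatim to $c_1,\ldots,c_{i-1}$, and they occupy exactly $n-i+2,\ldots,n$. Car $c_i$ then checks $a_i$ and the $s_i\le r_i-1$ spots behind it, all of which lie in $[n-i+2,n]$ and are occupied. Going forward from $a_i$, every spot up to $n$ is also occupied, so $c_i$ fails to park. Hence $\mathcal{F}_\alpha=\uparrow\!\rho$.

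The only delicate point is making the "first deviation" argument rigorous: the outcomes of the earlier cars must agree with the $\rho$-outcome. This requires the observation that, in the right-to-left filling, a car's parking spot is insensitive to how many backward steps it is allowed beyond the first free one it encounters. I would state this as a small lemma before running both inductions. I expect no other obstacles, since all remaining checks are inequalities following from $r_i<i$.
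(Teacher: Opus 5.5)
Your proposal is correct and is essentially the paper's proof: it uses the same preference $a_i=n+1-i+r_i$, the same induction showing that car $c_i$ parks in spot $n+1-i$, and the same ``first index with $s_i<r_i$'' argument for the reverse inclusion. The only minor difference is that you check every $\sigma\geq\rho$ directly, whereas the paper handles that inclusion by invoking Proposition~\ref{upset}. This makes your argument self-contained and independent of that nontrivial monotonicity result.
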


\begin{proof}
	The proof is constructive, as we will explicitly describe a parking preference $\alpha$ satisfying the desired property. 
	
	Given $\rho =r_1 r_2 \cdots r_n \in \Delta_n$, define $a_i =n+1-i+r_i$, and set $\alpha =(a_1 ,a_2 ,\ldots ,a_n )$. We start by showing that $\alpha \in PF_{n,\rho}$ (i.e., $\rho \in \mathcal{F}_{\alpha}$). To this aim, we prove the stronger claim that $\psi_{\rho}^{(\alpha )}(i)=n+1-i$, for all $i\leq n$ (roughly speaking, this means that each car occupies the rightmost available spot). Indeed, we clearly have that $\psi_{\rho}^{(\alpha )}(1)=a_1 =n$ (since $r_1 =0$). Moreover, supposing inductively that cars $c_1,\ldots ,c_{i-1}$ have occupied all spots from $n-i+2$ to $n$ (i.e., the last $i-1$ spots), we focus on car $c_i$. If $r_i =0$, then $a_i =n+1-i$, so $c_i$ can park in its preferred spot (as it is free). Otherwise, if $r_i >0$, then necessarily $a_i =n+1-i+r_i >n+1-i$, which means that the preferred spot of car $c_i$ is already occupied. So car $c_i$ has to drive backwards and, after \emph{exactly} $r_i$ backward steps, it finds the free spot $n+1-i$. This completes the proof of the fact that $\psi_{\rho}^{(\alpha )}(i)=n+1-i$.
	
	As a consequence of Proposition \ref{upset}, we have that, if $\rho'=r'_1 r'_2 \cdots r'_n \geq \rho$, then $\alpha \in PF_{n,\rho'}$, that is $\uparrow \! \rho \subseteq \mathcal{F}_{\alpha}$. Conversely, suppose that $\rho' \ngeq \rho$. This means that there exists an entry of~$\rho'$ which is strictly smaller than the corresponding entry of $\rho$. Let $i$ be the smallest index such that $r'_i <r_i$. Since, for all $j<i$, we have $r'_j \geq r_j$, the first $i-1$ cars parks in the same spots both in problem $\rho$ and in problem~$\rho'$ (as, for each such $j$, car $c_j$ has to drive backwards just $r_j$ steps to find the first available spot). However, when car $c_i$ tries to park in problem~$\rho$, it needs to drive backward at least one step (since $r_i >r'_i \geq 0$) and we have seen in the above proof that~$r_i$ is the \emph{minimum} number of backward steps it has to take in order to find an available spot. This implies that, in the problem $\rho'$, car $c_i$ is not able to park (as $r_i >r'_i$), hence $\alpha \notin PF_{n,\rho'}$. So we have showed that $\mathcal{F}_{\alpha}\subseteq \uparrow \! \rho$, which concludes the proof.	
\end{proof}

We remark that, for a given strategy $\rho$, the parking preference $\alpha$ described in the above proof is not the unique preference such that $\mathcal{F}_{\alpha}=\uparrow \! \rho$. Consider, for instance, the parking strategy $001 \in \Delta_3$: the two preferences $\alpha =(3,2,2)$ and $\beta =(1,3,3)$ are both such that $\mathcal{F}_{\alpha}=\mathcal{F}_{\beta}=\uparrow \! 001$. In other words, there are (principal) realizers that can be induced by different parking preferences.

Moreover, Theorem \ref{filter} asserts that any filter in $\Delta_n$ is a realizer. However, this is not true for a generic (non-principal) up-set. For example, when $n=3$, consider the up-set $\{ 011,002,012 \}$ of $\Delta_3$, and suppose that there exists $\alpha=(a_1 ,a_2 ,a_3 )\in PP_3$ such that $\mathcal{F}_{\alpha}=\{ 011,002,012 \}$. If $a_1 \neq a_2$, then certainly cars $c_1 $ and $c_2$ park in their respective preferred spots, thus, since $\alpha \in PF_{3,011}$, we also get $\alpha \in PF_{3,001}$, which is impossible (as $001 \notin \mathcal{F}_{\alpha}$). Therefore, suppose that $a_1 =a_2$. If $a_1 =a_2 =1$, then $\alpha \in PF_3 =PF_{3,000}$, which is impossible. If $a_1 =a_2 =2$, then it is easy to see that, independently from the value of $a_3$, we have $\alpha \in P_{3,010}$, again a contradiction. Finally, if $a_1 =a_2 =3$, then $\alpha \notin PF_{3,002}$ (as car $c_2$ is not able to park), against the hypothesis.

On the other hand, there exist non-principal up-sets that are realizers. For instance, it is not hard to see that, in $\Delta_3$, the up-set $\uparrow \! \{ 001,010\}$ is induced by the parking preference $222$, i.e. $\mathcal{F}_{222}=\uparrow \! \{ 001,010\}$. This suggests the particularly intriguing problem of characterizing which up-sets are realizers. While we have not been able to give a complete answer, we will provide a partial one in the next section. 

\section{Minimizing the number of backward steps}\label{min_back_steps}

Given a parking preference $\alpha$, a complete understanding of $\mathcal{F}_{\alpha}$ can be provided by describing the antichain of its minimal elements. This is another extremely interesting problem, which is still open in general. In the present section we move the first steps towards a solution by illustrating a characterization of those strategies $\rho =r_1 r_2 \cdots r_n$ which minimize the total number $|\rho |=\sum_{i=1}^{n}r_i$ of allowed backward steps for a given $\alpha \in PP_n$ (while still allowing each car to succesfully park). These are indeed a particular class of minimal elements of $\mathcal{F}_{\alpha}$, namely those having minimal rank in the lattice of parking strategies of order $n$. We will briefly call each of such strategies a \emph{minimizer} for $\alpha$.  

Before starting, we would like to remark that, for a given parking preference $\alpha$, minimizing the number of allowed backward steps for a succesful strategy is \emph{not} equivalent to finding the actual number of backward steps taken by all cars when following the $\mathbf{(n-1)}_n$-Naples parking rule. This is due to the fact that increasing the number of backward steps allowed for a single car may enable to decrease the number of backward steps required for several other cars. 
As an example, consider $\alpha =(4,4,3,2,3)\in PP_5$. If we use the $\mathbf{(n-1)}_n$-Naples parking rule, then each of the cars $c_2 ,c_3$ and $c_4$ needs to take a backward step in order to successfully park, for a total of 3 backward steps. On the other hand, it is not difficult to see that $\alpha \in PF_{5,00002}$ (in fact, the parking strategy 00002 minimizes the total number of allowed backward steps for $\alpha$).

Our first result provides a recipe to effectively construct a minimizer for a preference $\alpha$ which is based on the notion of excess function. We first need a technical lemma, which was proved in \cite{FV} for $k$-Naples parking function, but translates almost verbatim to the case of $\rho$-Naples parking functions. At any rate, we include the proof for the sake of completeness. Recall from Section \ref{preliminaries} that $U_{\alpha}=\{ 1\leq j\leq n\, |\, u_{\alpha}(j)\geq 1 \}$.

\begin{lemma}\label{p-1}
	Given $\alpha \in PP_n$ and $\rho \neq \mathbf{0}_n$, we have that $\alpha$ is a $\rho$-Naples parking function if and only if, for every maximal interval $[p,q]\subseteq U_{\alpha}$, spot $p-1$ is occupied.
\end{lemma}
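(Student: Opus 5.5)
The forward implication is immediate. If $\alpha\in PF_{n,\rho}$, then $n$ cars occupy $n$ spots, so every spot is occupied at the end of the $\rho$-Naples process, in particular every spot $p-1$ with $p=\min$ of a maximal interval of $U_\alpha$ (recall $1\notin U_\alpha$, so $p-1\geq 1$). The real content is the converse. I would prove it by contraposition: assuming some car fails under $\rho$, I would exhibit a maximal interval $[p,q]\subseteq U_\alpha$ with $p-1$ empty at the end.

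I would use two elementary monotonicity facts about the process. First, a spot never becomes free again once occupied. Second, a car that finally drives forward from its preferred spot $a_i$ parks in the first free spot $\geq a_i$, and fails exactly when all spots $\geq a_i$ are occupied at its arrival.

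Suppose some car fails. Let $f\geq 1$ be the number of failed cars, which equals the number of empty spots at the end. Let $t$ be the largest empty spot.

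\begin{itemize}
\item Any car with $a_i\leq t$ cannot fail and cannot park beyond $t$. When it drives forward from $a_i\leq t$ it reaches $t$, which is empty at every time, so it would stop there. It also cannot have preferred $t$ itself, so $|\alpha|_t=0$.
\item Hence all $n-t$ cars parked in $(t,n]$ and all $f$ failed cars have preference $\geq t+1$. This gives $\sum_{i\geq t+1}|\alpha|_i\geq n-t+f$, i.e.\ $u_\alpha(t+1)\geq f\geq 1$, so $t+1\in U_\alpha$.
\end{itemize}

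Now let $[p,q]$ be the maximal interval of $U_\alpha$ containing $t+1$. By Lemma \ref{elementary_intervals}, $u_\alpha(p)=1$, $u_\alpha(p-1)=0$ and $|\alpha|_{p-1}=0$. The goal is to show that $p-1$ is empty at the end, and I would argue by contradiction, supposing $p-1$ is occupied. The counting I would aim for is the following.

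\begin{itemize}
\item Exactly $n-p+2$ cars have preference $\geq p$, because $u_\alpha(p)=1$.
\item Exactly $n-p+2$ spots lie in $[p-1,n]$, and by assumption at least one of them, namely $t$, stays empty.
\item Since $|\alpha|_{p-1}=0$, the car occupying $p-1$ either came forward from a preference $\leq p-2$, or came backward from a preference $\geq p$.
\end{itemize}

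In the first case, I would show that the excess condition $u_\alpha(p-1)=0$ forces the cars with preference $\leq p-2$ to fill $[1,p-2]$ exactly. Then a car from there that leaves this region would cause a deficit below $p-1$ that some backward-moving car must cover. I would iterate this bookkeeping, in the spirit of the chain argument in Proposition \ref{upset}, to reach a contradiction with $t$ being empty. The cleaner route I would try first is to choose $t$ differently: take the empty spot $e$ minimizing something, or take the leftmost empty spot $e$ whose successor lies in $U_\alpha$, and directly verify that $e=p-1$.

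The main obstacle is exactly this identification of an empty spot with the left neighbour $p-1$ of a maximal excess interval. The excess function does not see the order of the cars nor backward moves. Backward moves (which are where $\rho\neq\mathbf{0}_n$ enters; for $\rho=\mathbf{0}_n$ the equivalence is false as stated, since then $PF_{n,\mathbf{0}_n}=PF_n$ requires $U_\alpha=\emptyset$) can move cars with large preference into spots below $p$, so the counting must carefully track which cars cross spot $p-1$.
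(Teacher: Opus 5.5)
Your forward direction and your computation $u_\alpha(t+1)\ge f\ge 1$ are correct. The converse, however, is not proved. The identification step you flag as the main obstacle is left open, and for your choice of $t$ (the \emph{largest} empty spot) the claim you set out to prove is false.

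Take $\alpha=(2,2,5,5,5)$ and $\rho=00100$. The cars park in spots $2,3,5$ and $c_4,c_5$ fail, so the empty spots are $1$ and $4$, and $t=4$. Here $u_\alpha(1),\dots,u_\alpha(5)$ are $0,1,0,1,2$, so $U_\alpha=\{2\}\cup[4,5]$. The maximal interval containing $t+1=5$ is $[4,5]$, and its left neighbour $3$ is occupied by $c_2$.

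The reason is structural. Your counting works for every empty spot $e$: $|\alpha|_e=0$, all cars with preference $<e$ park in $[1,e-1]$, and no car with preference $>e$ parks below $e$. Hence $u_\alpha(e)$ equals the number of empty spots below $e$. In particular $u_\alpha(t)=f-1$, so $t\in U_\alpha$ as soon as two cars fail. The interval through $t+1$ then extends below $t$, and its left neighbour may well be occupied, as in the example. No bookkeeping or chain argument can repair this choice.

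The missing idea is to take the \emph{smallest} empty spot $h$ instead. This is exactly what the paper does, and your fallback suggestion of the leftmost empty spot would lead there. By the identity above, $u_\alpha(h)=0$, so $h\notin U_\alpha$. Since $|\alpha|_h=0$, part (iii) of Lemma \ref{elementary_properties} gives $u_\alpha(h+1)=1$. Note that $h<n$, since otherwise there would be only $n-1$ cars. Thus $h+1$ is the left endpoint $p$ of a maximal interval of $U_\alpha$, and $p-1=h$ is empty, which is the contrapositive. The identity also shows that $h$ is the \emph{only} empty spot that can be of the form $p-1$, which is why your choice of extremal spot mattered.

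Finally, your parenthetical remark that the equivalence fails for $\rho=\mathbf{0}_n$ is mistaken. Nothing in this argument uses $\rho\neq\mathbf{0}_n$. If $U_\alpha\neq\emptyset$, then $\alpha\notin PF_n$ and $h$ is an unoccupied $p-1$, so both sides of the equivalence fail together.
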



\begin{proof}
	Clearly, if $\alpha$ is a $\rho$-Naples parking function, then every spot is occupied by some car.
	
	On the other hand, suppose that $\alpha$ is not a $\rho$-Naples parking function. Let $h$ be the smallest spot which remains unoccupied. Then all spots in $[1,h-1]$ are occupied by cars having preferences in $[1,h-1]$, and there is no car having preference in $[1,h-1]$ which parks after spot $h$. We thus have (by $(i)$ of Lemma \ref{elementary_properties}) that $u_{\alpha}(h)=h-1-\sum_{i=1}^{h-1}|\alpha |_i=0$, i.e. $h\notin U_{\alpha}$. Moreover, since $h$ remains unoccupied, then $|\alpha |_h =0$, which gives (using $(iii)$ of Lemma \ref{elementary_properties}) $u_{\alpha}(h+1)=u_{\alpha}(h)-|\alpha |_h +1=1$, i.e. $h+1$ is the minimum of a (maximal) interval of $U_{\alpha}$.
\end{proof}

The theorem is preceded by another lemma which states an easy lower bound.

\begin{lemma}\label{ineq}
	Let $\rho \in \Delta_n$ and $\alpha \in PP_n$. If $\alpha \in PF_{n,\rho}$, then
	\[
		|\rho |\geq \sum_{j\in U_{\alpha}}u_{\alpha}(j).
	\]
\end{lemma}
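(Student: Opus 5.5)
The plan is a double-counting argument over the ``boundaries'' between consecutive spots. Fix $\alpha\in PF_{n,\rho}$ and consider the final configuration under the $\rho$-Naples rule; every car $c_i$ parks in spot $s_i=\psi_{\rho}^{(\alpha)}(i)<\infty$. Under this rule a car either parks at or after its preferred spot (driving forward), or at a spot strictly before it (driving backward). The latter happens only within the first $r_i$ spots preceding $a_i$. Hence, whenever $s_i<a_i$, the car has taken $a_i-s_i$ backward steps and we have $a_i-s_i\leq r_i$. Setting $b_i=\max(a_i-s_i,0)$, we get $b_i\leq r_i$ for every $i$, so $|\rho|\geq \sum_{i=1}^n b_i$.

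Next, for each $2\leq j\leq n$, I let $N_j$ be the number of cars with $s_i<j\leq a_i$, that is, the cars that ``cross the boundary'' between spots $j-1$ and $j$ while moving backward. A car with $s_i<a_i$ crosses exactly the boundaries $j=s_i+1,\ldots,a_i$, which are $a_i-s_i=b_i$ of them; a car with $s_i\geq a_i$ crosses none. Swapping the order of summation therefore gives
\[
\sum_{i=1}^n b_i=\sum_{j=2}^{n} N_j .
\]

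The key step is the bound $N_j\geq u_\alpha(j)$ for every $j$. The cars with preference at least $j$ number $\sum_{i=j}^n|\alpha|_i$. Since all cars park in distinct spots, at most $n-j+1$ of them occupy spots in $[j,n]$. Thus at least $\sum_{i=j}^n|\alpha|_i-(n-j+1)=u_\alpha(j)$ of them park in a spot $s_i<j\leq a_i$, and each such car is counted in $N_j$.

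Finally, since each $N_j\geq 0$ and $1\notin U_\alpha$, we obtain
\[
|\rho|\geq \sum_{j=2}^n N_j\geq \sum_{j\in U_\alpha}N_j\geq \sum_{j\in U_\alpha}u_\alpha(j).
\]
The only delicate point is the observation that a car parking strictly before its preference did so by a backward search of length $a_i-s_i\leq r_i$. This follows directly from the definition of the rule: forward motion only ever starts from $a_i$, so a car that drives forward parks at or after $a_i$. I therefore expect no real obstacle, and Lemma \ref{p-1} is not needed here.
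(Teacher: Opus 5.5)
Your proof is correct and follows the same argument as the paper: for each $j\in U_{\alpha}$, at least $u_{\alpha}(j)$ cars with preference at least $j$ must park in the first $j-1$ spots and so take the backward step from $j$ to $j-1$, and summing over $j\in U_{\alpha}$ gives the bound. Your version spells out the double-counting that the paper leaves implicit in ``summing over all $j\in U_{\alpha}$'': each car makes exactly $a_i-s_i\leq r_i$ backward boundary crossings, so $\sum_j N_j\leq|\rho|$.
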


\begin{proof}
	Since $\alpha \in PF_{n,\rho}$, then all cars are able to park using the $\rho$-Naples parking rule. Let $j\in U_{\alpha}$. Since the cars having preference at least $j$ which park in the spots from $j$ to $n$ are at most $n-j+1$, by definition of the excess function we have that there are at least $u_{\alpha}(j)\geq 1$ cars (among those having preference at least $j$) which park in the first $j-1$ spots. This implies that at least $u_{\alpha}(j)$ cars take at least one backward step from $j$ to $j-1$. Summing over all $j\in U_{\alpha}$ gives the desired lower bound.  
\end{proof}

%

\begin{theorem}\label{minimize_back_steps}
	Let $\alpha \in PP_n$. For each maximal interval $[p,q]\subseteq U_{\alpha}$, and for each $j\in [p,q]$, define the set $T_j$ as follows:
	\begin{itemize}
		\item if $j=q$, then $T_q$ is the set of the $u_{\alpha}(q)$ largest indices $i$ in $[1,n]$ such that $a_i =q$;
		\item if $j\in [p,q-1]$, then $T_j$ is the set of the $u_{\alpha}(j)$ largest indices $i$ in $[1,n]$ such that $a_i =j$ or $i\in T_{j+1}$. 
	\end{itemize}
	
	Let $\rho$ be the parking strategy allowing a car $c_i$ to drive backwards precisely as many spots as the number of times $i$ is contained in some set $T_j$, i.e.,
	\[
		\rho =\sum_{j\in U_{\alpha}}\sum_{i\in T_j}e_i ,
	\]
	where $e_i \in \Delta_n$ is the $n$-tuple all of whose components are 0 except for the $i$-th one which is 1, and the sum is taken componentwise. Then $\alpha \in PF_{n,\rho}$ and $|\rho |=\sum_{j\in U_{\alpha}}u_{\alpha}(j)$, i.e. $\rho$ is a minimizer for $\alpha$.
\end{theorem}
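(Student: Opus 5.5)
The plan is to check four things in order: that the construction makes sense, the count of backward steps, that $\alpha\in PF_{n,\rho}$, and minimality. Minimality comes for free once the rest is established. By Lemma \ref{ineq} every successful strategy has rank at least $\sum_{j\in U_\alpha}u_\alpha(j)$. So if $\rho$ is successful and $|\rho|$ equals this sum, $\rho$ has minimal rank in $\mathcal F_\alpha$ and is a minimizer.

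\emph{Well-definedness and rank.} I first check that each $T_j$ can be chosen. For $j=q$, Lemma \ref{elementary_properties}(iii) together with $u_\alpha(q+1)\le 0$ (as $q+1\notin U_\alpha$) gives $|\alpha|_q\ge u_\alpha(q)+1\ge u_\alpha(q)$. For $j<q$, the candidates are the indices with $a_i=j$ together with $T_{j+1}$. These sets are disjoint because every $i\in T_{j+1}$ has $a_i\ge j+1$, an easy downward induction. Their total number is $|\alpha|_j+u_\alpha(j+1)=u_\alpha(j)+1$, again by (iii). So $T_j$ always exists, and $|\rho|=\sum_j|T_j|=\sum_{j\in U_\alpha}u_\alpha(j)$.

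Next I verify $\rho\in\Delta_n$, i.e. $r_i<i$. If $i$ lies in $T_j$ for $j$ in some range $[m',m]$ (here $m=a_i$ when the chain starts at $a_i$), then car $c_i$ is preceded by other cars with preferences in $[m',m]$. The reason is that $T_j$ consists of the \emph{largest} indices among $u_\alpha(j)+1$ candidates, so each step down the chain leaves at least one smaller index behind. This yields at least $r_i$ earlier cars. I would make this precise by tracking, for each $j$, the unique candidate dropped from $T_j$.

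\emph{Successful parking.} If $U_\alpha=\emptyset$ then $\rho=\mathbf 0_n$ and $\alpha\in PF_n$, so there is nothing to prove. Otherwise $\rho\neq\mathbf 0_n$, and by Lemma \ref{p-1} it suffices to show that spot $p-1$ is occupied for every maximal interval $[p,q]\subseteq U_\alpha$. A chaining observation gives: if $i\in T_p$, then following the recursive definition upward shows $i\in T_j$ for all $j\in[p,a_i]$. Hence $r_i\ge a_i-p+1$, so car $c_i$ is allowed to back up from $a_i$ all the way to spot $p-1$.

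Suppose for contradiction that spot $p-1$ stays empty. No car can then park below $p-1$ coming from a preference $\ge p$, since it would have had to pass $p-1$ while it was free. Also $|\alpha|_{p-1}=0$ by Lemma \ref{elementary_intervals}. Therefore all $n-p+2$ cars with preference $\ge p$ (there are this many since $u_\alpha(p)=1$) try to park inside $[p,n]$, and some car must fail.

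\emph{Main obstacle.} The hard part is turning this contradiction into a rigorous argument. I need to show that at least one car of $T_p$, when it arrives, finds every spot of $[p,a_i]$ occupied. It then drives back to $p-1$, which contradicts the assumption that $p-1$ stays empty.

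I would try a downward induction on $j$ from $q$ to $p$, with the following inductive claim. At the time the last car of $T_j$ arrives, the earlier cars have filled enough of $[j,n]$ that this car cannot park at or after $j$. Equivalently, the cars in $T_j$ are exactly the ones that cross from $j$ to $j-1$. The choice of largest indices is what makes this work: the latest-arriving cars meet the fullest street, and the count $u_\alpha(j)+1$ of candidates exceeds the free room by exactly one. Making this invariant precise is where I expect most of the effort to go. In particular, I must check that the backward moves of earlier $T$-cars do not free up room in a way that breaks the count.
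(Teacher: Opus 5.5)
\textbf{Verdict: genuine gap.} The central claim, $\alpha\in PF_{n,\rho}$, is never actually proved.

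\textbf{What is correct.} Your well-definedness and rank computations match the paper's. One small fix: for $q=n$ you cannot use (iii) with $u_\alpha(q+1)$; there $|\alpha|_n=u_\alpha(n)+1$ directly from the definition. The appeal to Lemma~\ref{ineq} for minimality is right. Your argument that $r_i<i$, via distinct earlier dropped candidates, is sound and even covers a point the paper leaves implicit.

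\textbf{The gap.} You reduce success to spot $p-1$ being occupied, then stop at what you call the main obstacle. That step is the heart of the theorem, and it is not given. The invariant you sketch would also not carry the induction. A statement about the \emph{last} car of $T_j$ is too weak, because the candidates for $T_{j-1}$ include \emph{every} member of $T_j$. In particular, the \emph{earliest} candidate for $T_{j-1}$ must be shown to reach spot $j-1$. Counting free room plays no role. Nor can backward moves ``free up'' spots: a spot, once occupied, stays occupied, and that monotonicity is all the argument needs.

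\textbf{The missing idea.} It is a time-ordering observation, proved by downward induction on $j\in[p,q]$: \emph{every car $c_i$ with $i\in T_j$ finds, on arrival, all spots of $[j,a_i]$ occupied and goes on to inspect spot $j-1$.}
\begin{itemize}
\item Every candidate for $T_j$ (a car with $a_i=j$, or a member of $T_{j+1}$) inspects spot $j$ when it arrives. This holds by its preference, or by the induction hypothesis. For $j=q$, all candidates have preference $q$.
\item $T_j$ takes the largest indices, and at least one candidate is left out. So some candidate arrives before every member of $T_j$, and from that moment spot $j$ is occupied permanently.
\item Hence each member of $T_j$ finds $j$ occupied. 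By your chain property, $i\in T_j$ implies $i\in T_k$ for all $k\in[j,a_i]$, so $r_i\ge a_i-j+1$. Therefore it may step back to $j-1$.
\item At $j=p$ we have $T_p\neq\emptyset$, because $u_\alpha(p)=1$. So some car inspects spot $p-1$, which is occupied afterwards.
\end{itemize}
Lemma~\ref{p-1} then concludes directly, with no contradiction argument needed. This is precisely the induction in the paper's proof.
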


\begin{proof}
	We start by showing that the sets $T_j$ are well defined. Given a maximal interval $[p,q]$ of $U_{\alpha}$, first of all we observe that $q+1\notin U_{\alpha}$, hence $u_{\alpha}(q+1)\leq 0$, and so (using (iii) of Lemma \ref{elementary_properties}) $|\alpha |_q =u_{\alpha}(q)-u_{\alpha}(q+1)+1\geq u_{\alpha}(q)+1$. This argument holds for $q\neq n$, whereas when $q=n$, we get that $|\alpha |_n =u_{\alpha}(n)+1$ by (i) of Lemma \ref{elementary_properties}. Therefore, in any case, $T_q$ is well defined, i.e. we can find \emph{at least} $u_{\alpha}(q)$ indices with the required properties to belong to $T_q$. We now argue by induction, and suppose that, for a given $j<q$, sets $T_i$ are defined and contain $u_{\alpha}(i)$ indices for all $j+1\leq i\leq q$. We thus have that the number of indices $h$ for which $a_h=j$ or $h\in T_{j+1}$ is given by $|\alpha |_j +u_{\alpha}(j+1)=u_{\alpha}(j)+1$ (since the two conditions are clearly mutually exclusive), hence $T_j$ is well defined and contains $u_{\alpha}(j)$ indices. Moreover, since each element of each $T_j$ (for all $j\in U_{\alpha}$) contributes exactly one step to the parking strategy $\rho$ (by construction), then $|\rho |=\sum_{j\in U_{\alpha}}u_{\alpha}(j)$.
	
	Now let $[p,q]\subseteq U_{\alpha}$ be maximal, and consider all cars having preference $q$. Since $|\alpha |_q \geq u_{\alpha}(q)+1$ (by Lemma \ref{elementary_properties}), after the first of such cars attempts to park, spot $q$ is certainly occupied, and we are left with at least $u_{\alpha}(q)$ cars with preference $q$ still having to park. Therefore each car $c_i$ such that $i\in T_q$ has preference $q$ and it has a possible backward step to take (provided by the $n$-tuple $e_i$). This implies that all cars having indices in $T_q$ will reach spot $q-1$ (remember that $T_q$ contains the $u_{\alpha}(q)$ \emph{largest} indices $i$ such that $a_i =q$). Now suppose (once again by induction) that, given $j\in [p,q-1]$, there are exactly $u_{\alpha}(j+1)$ cars reaching spot $j$ by using backward steps provided by the sets $T_{j+1},\ldots ,T_q$. As a consequence, there are exactly $|\alpha|_j +u_{\alpha}(j+1)$ cars that either have preference $j$ or reach spot $j$ by using backwards steps provided by sets $T_{j+1},\ldots ,T_q$: the first of such cars will occupy spot $j$ (if that spot is not already occupied), whereas all the remaining $|\alpha|_j +u_{\alpha}(j+1)-1=u_{\alpha}(j)$ cars will reach spot $j-1$ (thanks to the backward steps provided by $T_j$).
	
	The above inductive argument holds for all indices in $[p,q]$. In particular, considering the index $p$, since $u_{\alpha}(p)=1$ (by (i) of Lemma \ref{elementary_intervals}), there is at least one car reaching spot $p-1$, which implies that spot $p-1$ is certainly occupied (by that car or some previous one). Since this holds for every maximal interval of $U_{\alpha}$, we can invoke Lemma \ref{p-1} to conclude that $\alpha \in PF_{n,\rho}$.   
\end{proof}

In order to illustrate the above theorem, we give an example. Let $\alpha =(4,9,5,9,5,8,7,9,4,6)\in PP_{10}$. First of all we compute the excess set of $\alpha$, which is $U_{\alpha}=[2,9]$. Therefore in this case $U_{\alpha}$ consists of a single maximal interval. The table below shows how to construct the sets $T_j$ associated with this interval:

\begin{center}
	\begin{tabular}{ccccc}
		\hline
		& $u_{\alpha}(j)$ & $\{i\, |\, a_i =j\}$ & $\{i\, |\, a_i =j\} \cup T_{j+1}$ & $T_j$ \\
		\hline
		$j=9$ & 1 & $\{ 2,4,8\}$ & $\{ 2,4,8\}$ & $\{ 8\}$ \\
		$j=8$ & 1 & $\{ 6\}$ & $\{ 6,8\}$ & $\{ 8\}$ \\
		$j=7$ & 1 & $\{ 7\}$ & $\{ 7,8\}$ & $\{ 8\}$ \\
		$j=6$ & 1 & $\{ 10\}$ & $\{ 8,10\}$ & $\{ 10\}$ \\
		$j=5$ & 2 & $\{ 3,5\}$ & $\{ 3,5,10\}$ & $\{ 5,10\}$ \\
		$j=4$ & 3 & $\{ 1,9\}$ & $\{ 1,5,9,10\}$ & $\{ 5,9,10\}$ \\
		$j=3$ & 2 & $\emptyset$ & $\{ 5,9,10\}$ & $\{ 9,10\}$ \\
		$j=2$ & 1 & $\emptyset$ & $\{ 9,10\}$ & $\{ 10\}$ \\
		\hline
	\end{tabular}
\end{center}

Applying Theorem \ref{minimize_back_steps}, we thus get the strategy $\rho =2e_5 +3e_8 +2e_9 +5e_{10}=0000200325$, and it is easy to check that indeed $\alpha \in PF_{10,\rho}$.

\bigskip

At this point it is natural to ask if the construction provided by Theorem \ref{minimize_back_steps} is unique. The answer is negative, i.e., for a given parking preference $\alpha$, there are strategies minimizing the number of backward steps other than the single one described above. The next theorem shows even more, as it completely characterizes \emph{all} minimizers. As it is evident from the statement, all such strategies have a construction similar to the one of Theorem \ref{minimize_back_steps}, with a slight modification concerning the choice of the sets $T_q$, where $q$ is the maximum of a maximal interval of $U_{\alpha}$.

\begin{theorem}
	Let $\alpha \in PP_n$. A parking strategy $\rho$ is a minimizer for $\alpha$ 
	if and only if, for every maximal interval $[p,q]\subseteq U_{\alpha}$ and for all $j\in [p,q]$, there exist sets $T_j$ for which $\rho =\sum_{j\in U_{\alpha}}\sum_{i\in T_j}e_i$, which are defined as follows:
	\begin{itemize}
		\item if $j=q$, then $T_q$ is any set of $u_{\alpha}(q)$ distinct indices $i$ in $[1,n]$ such that $a_i =q$, excluding the first one of such indices;
		\item if $j\in [p,q-1]$, then $T_j$ is the set of the $u_{\alpha}(j)$ largest indices $i$ in $[1,n]$ such that $a_i =j$ or $i\in T_{j+1}$. 
	\end{itemize}
\end{theorem}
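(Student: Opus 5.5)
The proof splits into the two implications, with the ``if'' part handled by rerunning the argument of Theorem \ref{minimize_back_steps}, and the ``only if'' part by a counting argument that upgrades the inequalities in Lemma \ref{ineq} to equalities.

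\emph{If.} Let the sets $T_j$ be as in the statement. Their well-definedness and cardinalities follow exactly as in Theorem \ref{minimize_back_steps}. The only new check is that $T_q$ can be chosen. Because the first car with preference $q$ is excluded, we need $|\alpha|_q-1\geq u_\alpha(q)$, and this was established there. Hence $|\rho|=\sum_{j\in U_\alpha}u_\alpha(j)$.

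Next we repeat the inductive parking argument. When the first car with preference $q$ arrives, spot $q$ is occupied either by it or by an earlier car. Every later car with preference $q$ therefore finds $q$ taken, so each car of $T_q$ steps back to $q-1$. The induction downward through $[p,q-1]$ does not depend on how $T_q$ was chosen, since $T_j$ for $j<q$ is still built from the largest indices. We do have to verify that the largest-index choice is what guarantees that the chosen cars really arrive after the spot is occupied. This holds because among the $u_\alpha(j)+1$ candidate cars reaching $j$, only the earliest can be the one that fills $j$. Lemma \ref{p-1} then gives $\alpha\in PF_{n,\rho}$, and Lemma \ref{ineq} shows that $\rho$ is a minimizer.

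\emph{Only if.} Let $\rho$ be a minimizer, so $|\rho|=\sum_{j\in U_\alpha} u_\alpha(j)$ by Theorem \ref{minimize_back_steps} and Lemma \ref{ineq}. The argument of Lemma \ref{ineq} shows that for each $j\in U_\alpha$ at least $u_\alpha(j)$ cars cross from $j$ to $j-1$, and each crossing uses one allowed backward step. Equality forces four facts:
\begin{enumerate}
\item exactly $u_\alpha(j)$ cars cross $j\to j-1$ for $j\in U_\alpha$;
\item no backward crossings occur at positions $j\notin U_\alpha$;
\item every car uses all of its $r_i$ allowed steps;
\item no car checks backwards in vain and then drives forward.
\end{enumerate}
A car that searches backwards unsuccessfully still consumes its allowance, so such wasted allowance would contradict equality.

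Define $T_j$ as the set of cars crossing $j\to j-1$. By the second fact, no car crosses at $q+1$, so the cars in $T_q$ have preference exactly $q$. The first car with preference $q$ finds $q$ free, since nothing has crossed from above. It therefore does not move back and is excluded from $T_q$. For $j<q$, the cars crossing $j$ are among those with $a_i=j$ or $i\in T_{j+1}$, and there are $u_\alpha(j)$ of them out of $u_\alpha(j)+1$ candidates.

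The main obstacle is to show that the crossing set is exactly the $u_\alpha(j)$ \emph{largest} indices, that is, that the car which stays at $j$ is the earliest candidate. Spot $j$ cannot be filled by a car outside the candidate set: such a car would either come from below driving forward, or have crossed $j+1$ without belonging to $T_{j+1}$, and both are excluded. So the earliest candidate to arrive finds $j$ free and parks there, because cars move in index order. Every later candidate finds $j$ occupied and must continue backward, since by the fourth fact it cannot drive forward. This gives $T_j$ as claimed. Finally, $r_i$ equals the number of sets $T_j$ containing $i$, by the third fact together with the second, so $\rho=\sum_j\sum_{i\in T_j}e_i$.
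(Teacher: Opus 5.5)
Your proposal is correct and follows essentially the same route as the paper: the ``if'' part reruns the argument of Theorem~\ref{minimize_back_steps}, and the ``only if'' part forces equality in Lemma~\ref{ineq} to identify $T_j$ with the set of cars crossing from $j$ to $j-1$, with the earliest of the $u_\alpha(j)+1$ candidates being the one that stays at $j$. The one claim you assert without reason is that no car of smaller preference drives forward into a spot $j\in U_\alpha$, which you also need for the first car with preference $q$ to find $q$ free (``nothing has crossed from above'' is not enough). It follows from your first fact: the $n-j+1$ cars with preference $\geq j$ that do not cross $j\to j-1$ must fill all of $[j,n]$.
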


\begin{proof}
	If sets $T_j$ and $\rho$ are constructed as in the statement, then an argument completely similar to that of Theorem \ref{minimize_back_steps} shows that $\alpha \in PF_{n,\rho}$ and $|\rho |=\sum_{j\in U_{\alpha}}u_{\alpha}(j)$.
	
	Conversely, suppose that $\rho$ is a minimizer for $\alpha$, i.e. $\alpha\ \in PF_{n,\rho}$ and $|\rho |=\sum_{j\in U_{\alpha}}u_{\alpha}(j)$. Given any $j\in U_{\alpha}$, by definition of the excess function, there are at least $u_{\alpha}(j)$ cars having preference at least $j$ that park in the first $j-1$ spots. Therefore, since each of those cars needs to take the backward step from spot~$j$ to spot $j-1$, and we know that $|\rho |=\sum_{j\in U_{\alpha}}u_{\alpha}(j)$, there must be \emph{exactly} $u_{\alpha}(j)$ such cars. In other words, every index $j\in U_{\alpha}$ contributes exactly $u_{\alpha}(j)$ to the components of the strategy $\rho$. 	
	
	Now let $[p,q]$ be a maximal interval of $U_{\alpha}$. From (ii) of Lemma \ref{elementary_properties} we know that $u_{\alpha}(p-1)=0$, hence $\sum_{i=p-1}^{n}|\alpha|_i =n-p+2$. If there were a car with preference strictly less than $p-1$ parking in one of the spots of the interval $[p-1,n]$, then there would be a car with preference greater than or equal to $p-1$ which would park in one of the first $p-2$ spots. Clearly, at some point, that car would have to take a backward step from $p-1$ to $p-2$, but the strategy $\rho$ does not permit it (since, by minimality, all the backward steps of $\rho$ are provided by the indices of $U_{\alpha}$, as shown in the above paragraph). Therefore, in particular, all spots in $[p-1,q]$ are occupied by cars having preferences in $[p-1,n]$.
	
	We now focus on spot $p-1$. Since (by Lemma \ref{elementary_intervals}) $|\alpha |_{p-1}=0$, spot $p-1$ is occupied by some car driving backwards from spot $p$. As seen above, the total number of cars trying to park in spot $p$ is given by the number of cars driving backwards from spot $p+1$, which is $u_{\alpha}(p+1)$, plus the number of cars having preference $p$, which is $|\alpha |_p$. One of such cars parks in spot $p$, and so each of the remaining $u_{\alpha}(p+1)+|\alpha |_p -1=u_{\alpha}(p)$ cars certainly takes the backward step from $p$ to $p-1$. Therefore $\rho$ must provide a backward step for each of the last $u_{\alpha}(p)$ cars reaching spot $p$. We can use a completely analogous argument for each $j\in [p,q-1]$, thus showing that $\rho$ must provide a backward step for each of the last $u_{\alpha}(j)$ cars reaching spot $j$. The situation is slightly different for spot $q$. Indeed, recalling that the backward steps recorded in $\rho$ are entirely provided by cars moving from spot $j$ to spot $j-1$ for all $j\in U_{\alpha}$, there are no cars taking the backward step from spot $q+1$ to spot $q$ (as $q+1\notin U_{\alpha}$). Moreover, since $u_{\alpha}(q+1)\leq 0$, we have (Lemma \ref{elementary_intervals})
	\[
		|\alpha |_q =u_{q}-u_{\alpha}(q+1)+1\geq u_{\alpha}(q)+1.
	\]  
 	 
 	 Thus, the $u_{\alpha}(q)$ cars taking the backward step from $q$ to $q-1$ can be arbitrarily chosen among the $|\alpha |_q$ cars having preference $q$ (we have enough of them as we just proved), except for the first one of them (which occupies spot $q$).
 	 
 	 Summing up what we have done, we can conclude that $\rho$ needs to be constructed exactly as described in Theorem \ref{minimize_back_steps}, with the only difference regarding the contribution of the last spots of each maximal interval of $U_{\alpha}$ to the components of $\rho$, as desired. 
\end{proof}

As a byproduct, for a given $\alpha \in PP_n$, we are able to count the number of parking strategies $\rho$ minizing the number of backward steps and such that $\alpha \in PF_{n,\rho}$.  

\begin{corollary}\label{count_minim}
	Let $\alpha \in PP_n$ and suppose that $U_{\alpha}=\bigcup_{i=1}^{m}[p_i ,q_i ]$ is the decomposition of $U_{\alpha}$ into maximal intervals. The number of minimizers for $\alpha$ 
	is
	\[
		\prod_{i=1}^{m}\binom{|\alpha |_{q_i}-1}{u_{\alpha}(q_i)} .
	\]
\end{corollary}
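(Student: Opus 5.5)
The natural plan is to count minimizers through the bijection supplied by the preceding characterization theorem. That theorem says a strategy $\rho$ is a minimizer for $\alpha$ exactly when it arises as $\rho=\sum_{j\in U_{\alpha}}\sum_{i\in T_j}e_i$. Here, for each maximal interval $[p_i,q_i]$, the set $T_{q_i}$ is any $u_{\alpha}(q_i)$-subset of the indices $h$ with $a_h=q_i$, excluding the first such index. Every other $T_j$ with $j\in[p_i,q_i-1]$ is then determined recursively from $T_{j+1}$. So every minimizer is produced by some choice of the tuple $(T_{q_1},\ldots,T_{q_m})$, and every such choice produces a minimizer. The count then splits as follows: show that distinct tuples give distinct strategies, and count the tuples.

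The counting is direct. The number of cars with preference $q_i$ is $|\alpha|_{q_i}$. The first one is excluded, leaving $|\alpha|_{q_i}-1$ candidates. This is at least $u_{\alpha}(q_i)$, by the inequality $|\alpha|_{q_i}\geq u_{\alpha}(q_i)+1$ established in the proof of Theorem \ref{minimize_back_steps}. Hence there are $\binom{|\alpha|_{q_i}-1}{u_{\alpha}(q_i)}$ choices for each interval, and the choices for different intervals are independent. The tuples are therefore counted by the stated product.

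The main point is injectivity: two different tuples must yield different $\rho$. First I would observe that the sets attached to different maximal intervals involve disjoint sets of cars. All indices in the $T_j$ of the interval $[p_i,q_i]$ are cars with preference in $[p_i,q_i]$, by induction down from $q_i$, and these preference ranges are disjoint across intervals. So the restriction of $\rho$ to the cars with preference in $[p_i,q_i]$ depends only on $T_{q_i}$, and it suffices to treat one interval.

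Within one interval, I would recover $T_{q}$ from $\rho$ as follows. Claim: if $T_q\neq T'_q$, pick an index $h\in T_q\setminus T'_q$. Car $h$ has preference $q$, so it belongs to no $T'_j$ with $j<q$ either: by the recursion, the only way an index with $a_h=q$ enters $T'_j$ is through $T'_{j+1}$, and inductively through $T'_q$. Thus $r'_h=0$, while $r_h\geq 1$. Concretely, $T_q$ is exactly the set of indices $h$ with $a_h=q$ and $r_h\geq 1$, so $\rho$ determines $T_q$ and the map from tuples to strategies is injective. The one check needed here is that an index with preference $q$ can appear in lower $T_j$ only via $T_q$, which follows immediately from the recursive definition.
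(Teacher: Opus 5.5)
Your proposal is correct and follows the paper's argument: minimizers are in bijection with independent choices of the sets $T_{q_i}$, each counted by $\binom{|\alpha|_{q_i}-1}{u_{\alpha}(q_i)}$. The paper only asserts that distinct choices give distinct strategies, and your observation that $T_{q}=\{h : a_h=q,\ r_h\geq 1\}$ can be read off from $\rho$ supplies the justification for that step.
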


\begin{proof}
	Each of the required parking strategies corresponds to choosing the set $T_{q_j}$, for each $j=1,\ldots ,m$, and each such choice gives rise to a different parking strategy. Since each set $T_{q_j}$ consists of $u_{\alpha}(q_i)$ elements chosen among $|\alpha |_{q_i}-1$, the thesis immediately follows.  
\end{proof}

We now revisit the example provided after Theorem \ref{minimize_back_steps} in order to illustrate our last result. In that example we showed that the parking preference $\alpha =(4,9,5,9,5,8,7,9,4,6)\in PP_{10}$ is a $\rho$-Naples parking function, with $\rho =0000200325$. However this is not the only parking strategy that works. Indeed, since $|\alpha |_9 =3>2=u_{\alpha}(9)+1$, the above corollary shows that there is exactly another strategy $\tilde{\rho}$ such that $\alpha \in PF_{n,\tilde{\rho}}$ which minimizes the number of backward steps. Such a strategy can be explicitly contructed by choosing $T_9 =\{ 4\}$ instead of $T_9 =\{ 8\}$. Such a choice of course affects the construction of all the $T_j$'s, as described in the following table:

\begin{center}
	\begin{tabular}{ccccc}
		\hline
		& $u_{\alpha}(j)$ & $\{i\, |\, a_i =j\}$ & $\{i\, |\, a_i =j\} \cup T_{j+1}$ & $T_j$ \\
		\hline
		$j=9$ & 1 & $\{ 2,4,8\}$ & $\{ 2,4,8\}$ & $\{ 4\}$ \\
		$j=8$ & 1 & $\{ 6\}$ & $\{ 4,6\}$ & $\{ 6\}$ \\
		$j=7$ & 1 & $\{ 7\}$ & $\{ 6,7\}$ & $\{ 7\}$ \\
		$j=6$ & 1 & $\{ 10\}$ & $\{ 7,10\}$ & $\{ 10\}$ \\
		$j=5$ & 2 & $\{ 3,5\}$ & $\{ 3,5,10\}$ & $\{ 5,10\}$ \\
		$j=4$ & 3 & $\{ 1,9\}$ & $\{ 1,5,9,10\}$ & $\{ 5,9,10\}$ \\
		$j=3$ & 2 & $\emptyset$ & $\{ 5,9,10\}$ & $\{ 9,10\}$ \\
		$j=2$ & 1 & $\emptyset$ & $\{ 9,10\}$ & $\{ 10\}$ \\
		\hline
	\end{tabular}
\end{center}

The parking strategy $\tilde{\rho}$ arising from the above costruction is thus $\tilde{\rho}=0001211025$.

\bigskip

An interesting consequence of Corollary \ref{count_minim} is the following proposition, which identifies a class of up-sets of $\Delta_n$ which are not realizers. This result generalizes the example of a non-principal up-set which is not a realizer given after Theorem \ref{filter}.

\begin{proposition}
	Let $n\geq 3$, and suppose that $\mathcal{F}$ is an up-set of $\Delta_n$ generated by (at least two) elements having rank $\binom{n}{2}-1$ (i.e. coatoms of $\Delta_n$). Then $\mathcal{F}$ is not a realizer.
\end{proposition}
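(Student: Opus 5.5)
The plan is to compare the minimal elements of $\mathcal{F}$ with the minimizers described in Corollary \ref{count_minim}: the rank forced on the minimizers pins down $\alpha$ almost completely, and for that $\alpha$ the corollary gives exactly one minimizer instead of at least two.

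\emph{Setting up.} Argue by contradiction: assume $\mathcal{F}=\mathcal{F}_{\alpha}$ for some $\alpha\in PP_n$. By hypothesis the minimal elements of $\mathcal{F}$ are at least two coatoms of $\Delta_n$, all of rank $\binom{n}{2}-1$. Every element of $\mathcal{F}$ lies above one of these coatoms, so the minimum rank attained in $\mathcal{F}_\alpha$ is $\binom{n}{2}-1$. The minimizers for $\alpha$ are exactly the elements of $\mathcal{F}_\alpha$ of minimum rank, hence they are precisely these coatoms, and there are at least two of them. By Theorem \ref{minimize_back_steps} the minimum rank equals $\sum_{j\in U_\alpha}u_\alpha(j)$, so
\[
\sum_{j\in U_\alpha}u_\alpha(j)=\binom{n}{2}-1 .
\]

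\emph{Pinning down $\alpha$.} We have $1\notin U_\alpha$, and by (iv) of Lemma \ref{elementary_properties}, $u_\alpha(j)\le j-1$. Hence the sum above is at most $\sum_{j=2}^{n}(j-1)=\binom{n}{2}$. Consequently $u_\alpha(j)=j-1$ for all $j\in[2,n]$ except exactly one index $j_0$, where $u_\alpha(j_0)=j_0-2$ (this covers the case $j_0=2$, $u_\alpha(2)=0$, i.e. $2\notin U_\alpha$). Next observe that $u_\alpha(j)=j-1$ means all $n$ cars have preference at least $j$.
\begin{itemize}
\item If $j_0<n$, then $u_\alpha(n)=n-1$, so every car prefers $n$. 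Then $u_\alpha(j)=j-1$ for every $j$, and the sum equals $\binom{n}{2}$, a contradiction.
\item Therefore $j_0=n$. Since $n\ge 3$, the index $n-1$ lies in $[2,n-1]$, so $u_\alpha(n-1)=n-2$, meaning all cars prefer $n-1$ or $n$. Combined with $u_\alpha(n)=n-2$, this says exactly $n-1$ cars prefer $n$ and one car prefers $n-1$.
\end{itemize}

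\emph{Conclusion.} For this $\alpha$, $U_\alpha=[2,n]$ is a single maximal interval with $q=n$, $|\alpha|_n=n-1$ and $u_\alpha(n)=n-2\ge 1$. Corollary \ref{count_minim} then gives
\[
\binom{|\alpha|_n-1}{u_\alpha(n)}=\binom{n-2}{n-2}=1
\]
minimizer, contradicting the existence of at least two.

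The main point needing care is the counting step. One must justify that $u_\alpha(j)=j-1$ forces every car to have preference at least $j$; this follows because $\sum_{i\ge j}|\alpha|_i\le n$. One must also handle the boundary index $j_0=2$ correctly; the case analysis above does this, since $j_0=2<n$ falls under the first case and is excluded. The hypothesis $n\ge 3$ is used exactly to guarantee that $n-1\ge 2$, so that $n-1$ is one of the indices where $u_\alpha(j)=j-1$.
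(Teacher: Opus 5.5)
Your proof is correct and follows essentially the paper's argument: you obtain $\sum_{j\in U_\alpha}u_\alpha(j)=\binom{n}{2}-1$ from Theorem \ref{minimize_back_steps}, then use $u_\alpha(j)\le j-1$ to rule out $\alpha=(n,\ldots,n)$ and force $U_\alpha=[2,n]$ with $|\alpha|_n=n-1$, and finally get a single minimizer from Corollary \ref{count_minim}, which contradicts the existence of at least two coatom generators. One harmless slip: when $2\notin U_\alpha$ you only know $u_\alpha(2)\le 0$, not $u_\alpha(2)=0$, but your case analysis never uses the exact value, so the argument is unaffected.
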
 

\begin{proof}
	By contradiction, suppose that there exists $\alpha \in PP_n$ such that $\mathcal{F}=\mathcal{F}_{\alpha}$. Given a minimal element $\rho \in \mathcal{F}$, which has rank $\binom{n}{2}-1$, we have that
	\begin{equation}\label{formula}
		\frac{n(n-1)}{2}-1=|\rho |=\sum_{j\in U_{\alpha}}u_{\alpha}(j).
	\end{equation}
	We now focus on $u_{\alpha}(n)$. If $u_{\alpha}(n)=n-1$, then by definiton of excess function we have that $|\alpha |_n=u_{\alpha}(n)+1=n$, hence $\alpha =(n,n,\ldots ,n)$. As a consequence, for all $j\in [2,n]$, we get
	\[
		u_{\alpha}(j)=\sum_{i=j}^{n}|\alpha |_i -(n-j+1)=|\alpha |_n -(n-j+1)=j-1.
	\]
	
	This would imply that
	\[
	\sum_{j\in U_{\alpha}}u_{\alpha}(j)=\sum_{j=2}^{n}(j-1)=\frac{n(n-1)}{2},
	\]
	which contradicts (\ref{formula}).
	
	Therefore we must have $u_{\alpha}(n)\leq n-2$. By (iv) of Lemma \ref{elementary_properties}, we also have that $u_{\alpha}(j)\leq j-1$, for all $j\in [2,n-1]$. Thus, in order to have (\ref{formula}), we must impose that $u_{\alpha}(j)=j-1$ for all $j\in [2,n-1]$ and $u_{\alpha}(n)=n-2$. This means that $U_{\alpha}$ consists of a single maximal interval (i.e. $U_{\alpha}=[2,n]$) and $|\alpha |_n =u_{\alpha}(n)+1=n-1$. By Corollary \ref{count_minim}, the number of parking strategies $\rho \in \Delta_n$ such that $\alpha \in PF_{n,\rho}$ and $|\rho |=\frac{n(n-1)}{2}-1$ is given by $\binom{|\alpha |_n -1}{u_{\alpha}(n)}=1$. However, we are assuming that $\mathcal{F}$ has at least two elements of minimal rank, which gives again a contradiction.  
\end{proof}

We close this section with an apparently nontrivial observation concerning the total number of steps in a fixed parking preference. Given $\rho \in \Delta_n$ and $\alpha \in PP_n$, consider the set of all steps taken by all cars during the parking procedure according to $\alpha$. We say that a step is \emph{trivial} when it is taken by a car \emph{before} reaching its preferred spot. Thus every backward step is nontrivial, whereas the nontrivial forward steps are precisely those taken by some cars after having checked its preferred spot (and, if required, after having checked a certain number of previous spots). The next proposition states that a strategy that minimizes the number of backward steps also minimizes the total number of nontrivial steps, and this number relates to $\alpha$ in a very neat way. Recall that a \emph{minimizer} $\rho$ for a parking preference $\alpha$ is a parking strategy such that $\alpha \in PF_{n,\rho}$ and the number of backward steps taken during the parking process is minimal. 

\begin{proposition}
	Let $\alpha  =(a_1 ,a_2 ,\ldots ,a_n )\in PP_n$ and $\rho =r_1 r_2 \cdots r_n \in \Delta_n$ any parking strategy. The minimum total number of nontrivial steps needed for the parking process with respect to $\rho$ is $\sum_{j=1}^n |u_{\alpha}(j)|$, and this number is achieved if and only if $\rho$ is a minimizer.   
\end{proposition}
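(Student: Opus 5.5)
We plan a boundary-crossing count. For each $j\in[1,n-1]$ consider the boundary between spots $j$ and $j+1$. Every nontrivial step crosses exactly one such boundary, in one direction, so the total number of nontrivial steps is $\sum_{j=1}^{n-1}(B_j+F_j)$. Here $B_j$ is the number of backward crossings from $j+1$ to $j$, and $F_j$ is the number of nontrivial forward crossings from $j$ to $j+1$. (Cars exiting the street past $n$ only matter when $\alpha\notin PF_{n,\rho}$; we restrict to successful strategies, as the statement implicitly does.)

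Assuming all cars park, we compute the net flow across boundary $j$. Exactly $\sum_{i=j+1}^n|\alpha|_i$ cars prefer a spot in $[j+1,n]$, and exactly $n-j$ cars end up in $[j+1,n]$. We claim the difference $B_j-F_j$ equals the excess $u_\alpha(j+1)=\sum_{i\ge j+1}|\alpha|_i-(n-j)$. This follows once we note that a single car crosses boundary $j$ nontrivially at most twice, and that its net crossing is $\pm1$ or $0$ according to whether its preference and final spot lie on opposite sides. A car that prefers $\ge j+1$ may back up past $j$ and then come forward again. Since each of its crossings is nontrivial, the net count is unaffected, but the total crossing count $B_j+F_j$ is then at least $|B_j-F_j|$ plus twice the number of such double crossers. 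In particular $B_j+F_j\ge |u_\alpha(j+1)|$. Summing over $j$, and using $u_\alpha(1)=0$, gives the lower bound $\sum_{j=1}^n|u_\alpha(j)|$ for every successful $\rho$. Trivial forward steps are excluded throughout, so a car moving toward its own preferred spot contributes nothing.

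Equality forces, for every $j$, that crossings of boundary $j-1$ all go one way. If $u_\alpha(j)>0$ they are exactly $u_\alpha(j)$ backward crossings. If $u_\alpha(j)\le 0$ there are no backward crossings. Hence the total number of backward steps is $\sum_{j\in U_\alpha}u_\alpha(j)$, which is the minimum by Lemma~\ref{ineq}. Conversely, if $\rho$ is a minimizer, the proof of the characterization theorem shows two things: backward crossings occur only at boundaries $j-1$ with $j\in U_\alpha$, and there they number exactly $u_\alpha(j)$. It remains to exclude forward crossings at boundaries with $j\in U_\alpha$. A forward crossing there, combined with net flow $u_\alpha(j)$, would force more than $u_\alpha(j)$ backward crossings, which contradicts the exact count. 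At boundaries with $j\notin U_\alpha$ there are no backward crossings, so the crossings are exactly $-u_\alpha(j)$ forward ones. Thus the total is $\sum_j|u_\alpha(j)|$, and it is achieved.

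The delicate step is making precise the relation between net crossings and $u_\alpha$, including the double crossings: a car backing past a boundary and later moving forward past it again. Here we must check carefully that both crossings are nontrivial, so that they do inflate the count, and that a backward run never crosses a boundary twice. The second point holds because the backward phase is monotone, and the forward phase from a backed-up position is monotone as well, since it starts below the preferred spot. With that settled, the identity "net $=u_\alpha(j)$" is a pure counting of origins and destinations.
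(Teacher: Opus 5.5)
Your argument is correct, but it takes a different route from the paper's. The paper first passes to a ``tight'' strategy: each $r_i$ is replaced by the number of backward steps $c_i$ actually uses, so every car with $r_i>0$ parks at $a_i-r_i$. It then observes that $\tilde a_i=a_i-r_i$ defines a classical parking function with the same outcome. From this it counts the nontrivial forward steps globally as $\binom{n+1}{2}-\sum_i\tilde a_i=\binom{n+1}{2}-\sum_j j|\alpha|_j+|\rho|$, and combining this with Lemma~\ref{ineq} for the backward steps gives the bound and the equality case.

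Your identity $B_j-F_j=u_\alpha(j+1)$ is a boundary-by-boundary refinement of that computation. Summed over $j$ it gives $F-B=\binom{n+1}{2}-\sum_j j|\alpha|_j$, which is the paper's formula when $B=|\rho|$. Keeping it local is what buys you three things:
\begin{itemize}
\item The termwise bound $B_j+F_j\ge|u_\alpha(j+1)|$ holds for \emph{every} successful $\rho$, with no reduction to tight strategies. The paper states that reduction a little loosely, since the replacement can strictly lower the step count when a car backs up fully and then drives forward.
\item Cars that back up and return are absorbed automatically as double crossings.
\item Equality has a transparent meaning: at each boundary, all crossings go in one direction.
\end{itemize}
In exchange, the paper's route produces an explicit formula for the forward-step count of any tight strategy in terms of $|\rho|$ alone.

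One point to state precisely. Your equality case shows that the number of backward steps \emph{actually taken} equals $\sum_{j\in U_\alpha}u_\alpha(j)$. That this is the least possible follows from your own identity ($B_{j-1}\ge u_\alpha(j)$ since $F_{j-1}\ge 0$), not from Lemma~\ref{ineq} as stated, which bounds $|\rho|$. So what you establish is the equivalence for minimizers in the sense recalled just before the proposition, namely backward steps taken. That is the right reading: with the earlier definition (minimal $|\rho|$) the ``only if'' would be false. For example, for $\alpha=(3,3,1)$ the strategy $\rho=011$ produces a single nontrivial step, just like the unique minimal-rank strategy $010$, although $|\rho|=2$.
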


\begin{proof}
	Suppose for the moment that $\rho \in \mathcal{F}_{\alpha}$ is \emph{any} successful parking strategy for $\alpha$. Without loss of generality, we can suppose that $\rho$ is minimal in $\mathcal{F}_{\alpha}$. Indeed, if $\rho$ were not minimal, for all $i$ such that $r_i >0$, we could just replace $r_i$ with the exact number of backward steps taken by $c_i$ to park in the same spot: this does not change the outcome map of the preference, and so also the total number of nontrivial steps taken by all cars remains the same. 

	We start by computing the total number of nontrivial forward steps taken by all cars during the parking process, according to the parking preference $\alpha$ and following the parking strategy $\rho$. To this aim, we observe that, since $\rho$ is minimal in $\mathcal{F}_{\alpha}$, for all $r_i >0$ we have that $\psi_{\rho}^{(\alpha)} (i)=a_i -r_i$ (recall that $\psi_{\rho}^{(\alpha)}$ is the outcome map of $\alpha$ with respect to the parking strategy $\rho$), whereas all the remaining cars only take forward steps. Indeed, if there were a car $c_i$ parking in a spot larger than $a_i -r_i$, we could just replace $r_i$ with a smaller value (still having a valid parking strategy for $\alpha$), against the minimality of $\rho$. 

	Define $\tilde{\alpha}=(\tilde{a}_1 ,\tilde{a}_2 ,\ldots ,\tilde{a}_n)\in PP_n$ by setting $\tilde{a}_i =a_i -r_i$, for all $i$. 
	Then clearly $\tilde{\alpha}\in PF_n$, and the total number of nontrivial forward steps taken by all cars when parking according to $\tilde{\alpha}$ (using the classical parking rule) is equal to the total number of nontrivial forward steps taken by all cars when parking according to $\alpha$ (using the $\rho$-Naples parking rule). Now, focussing on $\tilde{\alpha}$, we observe that the total number of nontrivial forward steps taken by all cars can be computed as the difference between the sum of the actual spots in which each car parks and the sum of the preferred spots of each car. Since the image of the outcome map $\psi_{\rho}^{(\tilde{\alpha})}$ is a permutation of $[1,n]$ (because $\tilde{\alpha}\in PF_n$), we get that the total number of nontrivial forward steps taken by all cars when parking according to $\alpha$ using the $\rho$-Naples parking rule is given by
	\begin{align*}
		\sum_{i=1}^{n}i-\sum_{i=1}^{n}\tilde{a}_i&=\binom{n+1}{2}-\left( \sum_{i=1}^{n}a_i-\sum_{i=1}^{n}r_i \right) \\
		&=\binom{n+1}{2}-\sum_{j=1}^{n}j|\alpha |_j +|\rho | \\
		&\geq \binom{n+1}{2}-\sum_{j=1}^{n}j|\alpha |_j +\sum_{j\in U_{\alpha}}u_{\alpha}(j) \\
		&=\binom{n+1}{2}-\sum_{j=1}^{n}j|\alpha |_j +\sum_{j=1}^{n}\left( \sum_{k=j}^{n}|\alpha |_k -(n-j+1)\right) -\sum_{j\notin U_{\alpha}}u_{\alpha}(j) \\
		&=\binom{n+1}{2}-\sum_{j=1}^{n}j|\alpha |_j +\sum_{j=1}^{n}j|\alpha |_j -\sum_{j=1}^{n}(n-j+1)-\sum_{j\notin U_{\alpha}}u_{\alpha}(j) \\
		&=-\sum_{j\notin U_{\alpha}}u_{\alpha}(j),	
	\end{align*}
	where the inequality $|\rho |\ge \sum_{j\in U_{\alpha}}u_{\alpha}(j)$ follows from Lemma \ref{ineq}. Note, in particular, that the final quantity above is indeed nonnegative, since $u_{\alpha}(j)\le 0$ for all $j\notin U_{\alpha}$, and that it constitutes a lower bound depending only on $\alpha$ for the number of nontrivial forward steps. By Lemma \ref{ineq} and Theorem \ref{minimize_back_steps}, we get an equality in the above formula if and only if $\rho$ is a minimizer. Thus, $\rho$ minimizes the number of nontrivial forward steps if and only if $\rho$ minimizes the number of backward steps. In such a case, $\rho$ also minimizes the \emph{total} number of nontrivial steps, for a total of $\sum_{j=1}^n |u_{\alpha}(j)|$ steps, as desired.
%
\end{proof}

As a final remark, we observe that, since the excess function of a parking preference $\alpha$ is permutation-invariant (i.e., the excess function is the same for any rearrangement of the entries of $\alpha$), the minimum number of nontrivial steps for a successful parking strategy is also permutation-invariant (i.e. it remains the same for any rearrangement of the entries of $\alpha$).
	
\section{Minimizing the number of cars driving backwards}\label{min_cars_back}

Given a parking strategy $\rho$, the \emph{support} of $\rho$, denoted $\textrm{supp}(\rho )$, is the set of indices $i$ such that $r_i \neq 0$. Another step towards the understanding of the up-sets $\mathcal{F}_{\alpha}$ is the characterization of its minimal elements $\rho$ having minimal support. In the present section we move towards this direction, by investigating, for a given parking preference $\alpha$, the set of successful parking strategies for $\alpha$ having minimal support. In other words, we look for those successful parking strategies for $\alpha$ that minimize the number of cars driving backwards.   

The problem of minimizing the number of cars driving backwards is clearly different from the problem (discussed in the previous section) of minimizing the number of backward steps. As an example, given the parking preference $(3,3,2,4)$, the (successful) strategy $0110$ minimizes the number of backward steps, whereas the (successful) strategy $0003$ minimizes the support.

The next theorem is the main result of this section. It states a lower bound for the support of parking strategies in which those cars that are allowed to move backwards can always reach an unoccupied position. The subsequent proposition proves that such a lower bound can indeed be attained. We remark that the lower bound found in the next theorem corresponds to the intuitive strategy 
for which a car can move backwards only if it cannot park by moving forward. Though this is the most natural and intuitive strategy, we have not been able to find an equally simple proof of the corresponding lower bound. 


\begin{theorem}\label{lower_bound}
	Let $\alpha \in PP_n$. For each $j\in U_{\alpha}$, define $\tilde{T}_j$ as the set of the last $u_{\alpha}(j)$ cars having preference at least $j$, and set $\tilde{T}=\bigcup_{j\in U_{\alpha}}\tilde{T}_j$. Let $\rho =r_1 r_2 \cdots r_n \in \Delta_n$. 
	If $\alpha \in PF_{n,\rho}$, then $|\emph{supp}(\rho )|\geq |\tilde{T}|$.	
\end{theorem}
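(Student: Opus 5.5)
The plan is a counting argument carried out ``from the end of the queue'', proving a statement stronger than $|\textrm{supp}(\rho)|\geq|\tilde{T}|$. Write $A_j$ for the set of cars having preference at least $j$, and $S=\textrm{supp}(\rho)$. I will try to prove that, for every $t\in[1,n]$,
\[
	|S\cap[t,n]|\;\geq\;|\tilde{T}\cap[t,n]| .
\]
The theorem is the case $t=1$. The first step is a description of the right-hand side in terms of the excess function. Since $\tilde{T}_j$ is a final segment (in car order) of $A_j$, we have $|\tilde{T}_j\cap[t,n]|=\min\{u_{\alpha}(j),|A_j\cap[t,n]|\}$. I will then try to show, by downward induction on $t$, that $|\tilde{T}\cap[t,n]|$ is obtained by a greedy rule. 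Scanning the cars $c_n,c_{n-1},\ldots$ backwards, car $c_i$ is added to $\tilde{T}$ exactly when some $j\in U_{\alpha}$ with $j\leq a_i$ still has ``demand'', meaning that fewer than $u_{\alpha}(j)$ cars of $A_j$ have been scanned so far.

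The second step is the key lemma, which plays the role of Lemma \ref{ineq} for suffixes of the list of cars. Fix $j\in U_{\alpha}$. All $\rho$-parkings end with every spot filled, and at most $n-j+1$ cars of $A_j$ park in $[j,n]$. Hence at least $u_{\alpha}(j)$ cars of $A_j$ park in $[1,j-1]$, and each of these must lie in $S$, since it drives backwards. Refined to a suffix, the claim is this: among the last $s\leq u_{\alpha}(j)$ cars of $A_j$, at least $s$ minus (the number of spots of $[j,n]$ still free after cars of $A_j$ preceding them have parked, but that are later filled) lie in $S$. I would prove it by an exchange argument. Suppose a car of $A_j$ that is late in the order parks in $[j,n]$ without driving backwards. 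Then the spot it takes was free when earlier $A_j$ cars drove past it, which forces one of those earlier cars to drive back below $j$. This lets me charge each ``missing'' supported car among the last ones to a supported car earlier in the order, without ever double counting, because spots in $[j,n]$ are used only once. Cars outside $S$ behave as in the classical rule, so I can use the monotonicity of classical parking for them when comparing occupancies.

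The third step combines the lemmas for different $j$, and I expect it to be the main obstacle. The sets $\tilde{T}_j$ overlap, since a single car with large preference may belong to many of them. A supported car may therefore be ``used'' by several $j$'s, whereas the union $\tilde{T}$ counts it only once. To handle this I plan a Hall-type matching. Build a bipartite graph between $\tilde{T}$ and $S$, joining $c_i\in\tilde{T}_j$ to supported cars of $A_j$ with index at least as large as the position of $c_i$ within the suffix structure. Then verify Hall's condition using the suffix inequalities of step two together with the nestedness $A_{j}\supseteq A_{j+1}$; this nestedness makes the relevant families of suffixes form a chain-like structure. If Hall's condition turns out to be awkward, the fallback is the downward induction on $t$ mentioned above. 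In that version, when passing from $t+1$ to $t$ the right-hand side increases by at most one, and by exactly one only when $c_t$ is forced into $\tilde{T}$ by the greedy rule. In that case I would argue, from the residual excess of the unparked cars $c_t,\ldots,c_n$ in the free spots left by $c_1,\ldots,c_{t-1}$, that either $c_t\in S$ or there is slack in $|S\cap[t+1,n]|$.
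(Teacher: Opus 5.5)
\textbf{The strengthened claim is false, so the plan cannot work.} You aim to prove $|S\cap[t,n]|\geq|\tilde{T}\cap[t,n]|$ for every $t$, and this fails.

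Take $\alpha=(2,2,2)$ and $\rho=010$; recall that $\mathcal{F}_{222}=\uparrow\!\{001,010\}$. Car $c_1$ parks in $2$, $c_2$ backs into $1$, and $c_3$ drives forward to $3$. So $\alpha\in PF_{3,\rho}$ with $S=\{2\}$. But $U_{\alpha}=\{2\}$, $u_{\alpha}(2)=1$ and $\tilde{T}=\tilde{T}_2=\{3\}$, so at $t=3$ you get $0<1$. This is the base case of your downward-induction fallback. Also, any Hall-type matching sending $c_i\in\tilde{T}$ to a supported car of index at least $i$ leaves $c_3$ unmatched. The paper's own example $\beta=(3,3,3,2,3,3,3)$ with $\rho=0000400$ fails the same way ($S=\{5\}$, $\tilde{T}=\{7\}$).

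The failure is structural. $\tilde{T}_j$ consists of the \emph{last} $u_{\alpha}(j)$ cars of $A_j$. So among all $u_{\alpha}(j)$-subsets of $A_j$ it has the most elements in every suffix $[t,n]$, while a successful strategy may let \emph{earlier} cars of $A_j$ do the backing up. Your step-two exchange argument says exactly this: a late car of $A_j$ that does not back up is compensated by an earlier one. That is why the slack term in your key lemma can never be absorbed into a suffix inequality.

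\textbf{The inequality must go the other way, and the overlap still needs a real device.} For a single $j$, any $X\subseteq A_j$ with $|X|\geq u_{\alpha}(j)$ trivially satisfies $|X\cap[1,i]|\geq|\tilde{T}_j\cap[1,i]|$ for all $i$. The substantive difficulty is the overlap among the $\tilde{T}_j$, which you correctly flagged.

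The paper handles it by forward induction on the car index. It first normalizes $r_i\in\{0,i-1\}$, which is harmless by the up-set property. It then sets $\lambda_i=\min\{j\in U_{\alpha} : \tilde{T}_j\cap[1,i]\neq\emptyset\}$ and proves two statements jointly:
\begin{itemize}
\item[(a)] if the first $i$ cars park, then $|\{j\leq i : a_j\geq\lambda_i,\ r_j=j-1\}|\geq|\tilde{T}\cap[1,i]|$;
\item[(b)] if equality holds in (a), then the first $i$ cars fill $[\lambda_i,n]$.
\end{itemize}
Counting only cars with preference at least $\lambda_i$, together with (b), is the bookkeeping that lets each new element of $\tilde{T}$ be paired with a backing car not counted before. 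This matters especially when $\lambda_i<\lambda_{i-1}$, where the new car has preference in $[\lambda_i,\lambda_{i-1}-1]$. Statement (b) is itself proved by an exchange. Let $h$ be the smallest free spot in $[\lambda_i,n]$. Set $r_m=0$ for the last car with preference in $[\lambda_i,h-1]$ that backed below $\lambda_i$, and derive a contradiction with (a) for the modified strategy.

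Your proposal needs the direction of the strengthening reversed, plus some device of this kind. As written, steps one to three aim at a false statement.
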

  
  
\begin{proof}
	The proof is a very careful and detailed analysis consisting of a rather large number of cases to consider. In the hope of making it more readable, we will mark each (nontrivial) case that will be dealt with by writing the first words of the corresponding paragraph in blue. This certainly does not simplify the proof itself, but hopefully might help the reader to parse it more efficiently. 
	
	Since we are interested only in the support of $\rho$, in the proof we will assume (without loss of generality) that for all $i\leq n$, either $r_i =0$ or $r_i =i-1$. 
	
	For all $i\leq n$, set
	\[
	\lambda_i =\min \{ j\in U_{\alpha}\, |\, \tilde{T}_j\cap [1,i]\neq \emptyset \} .
	\]
	
	By convention, if there is no $\tilde{T}_j$ such that $\tilde{T}_j\cap [1,i]\neq \emptyset$ for a given $i$, we set $\lambda_i =\infty$. In words, for a given preference $\alpha$, $\lambda_i$ denotes the minimum ``critical position" in $U_{\alpha}$ such that, among the first $i$ cars, there is at least one for which there is a positive excess of cars in the position corresponding to its preference. Directly from the definition, it is immediate to observe that $\lambda_{i+1}\leq \lambda_i$ , for all $i$.
	
	The key step for the proof consists of proving (by induction) the following statement: for all $i\leq n$,
	
	\begin{itemize}
			\item[(a)] if the first $i$ cars successfully park, then
			\begin{equation}\label{key}
				|\{ j\leq i\, |\, a_j \geq \lambda_i , r_j =j-1\} |\geq |\tilde{T}\cap [1,i]|;
			\end{equation}
			\item[(b)] moreover, if $\lambda_i \neq \infty$ and in (\ref{key}) equality holds, then all spots in $[\lambda_i , n]$ are occupied by some of the first $i$ cars.
	\end{itemize}

	The starting case of our inductive argument is when $i=1$. In such a case, clearly car $c_1$ (the first car) parks in its preferred spot $a_1$. Moreover, for every $j\in U_{\alpha}$, the set of spots $[j,n]$ is nonempty, thus $c_1$ cannot be among the last $u_{\alpha}(j)$ cars (i.e., one of the cars ``in excess") which tries to park in or after spot $j$ (because $c_1$ is the first car). 
	This means that, for all $j\in U_{\alpha}$, $1\notin \tilde{T}_j$, hence $1\notin \tilde{T}$, and so (\ref{key}) is trivially satisfied. Notice that statement (b) of our claim is also trivially satisfied, since $\lambda_1 =\infty$.
	
	{\color{blue}Now suppose that $2\leq i\leq n$}, and assume that the first $i$ cars park successfully. Moreover, suppose by induction that our claim holds for all values between 1 and $i-1$. We will prove our claim  by means of a careful and detailed analysis of all cases that can occur.
	
	If $i\notin \tilde{T}$, then necessarily $\lambda_i =\lambda_{i-1}$, and we immediately get (using the inductive hypothesis)
	\begin{align*}
	|\{ j\leq i\, |\, a_j \geq \lambda_i , r_j =j-1\} | &\geq |\{ j\leq i-1\, |\, a_j \geq \lambda_{i-1} , r_j =j-1\} | \\
														&\geq |\tilde{T}\cap [1,i-1]| \\
														&=|\tilde{T}\cap [1,i]|.
	\end{align*}
	
	Moreover, if $\lambda_i \neq \infty$ and in (\ref{key}) equality holds, then the above chain of inequalities becomes a chain of equalities, thus in particular $|\{ j\leq i-1\, |\, a_j \geq \lambda_{i-1} , r_j =j-1\} |=|\tilde{T}\cap [1,i-1]|$. By induction, we then have that all spots in $[\lambda_{i-1},n]=[\lambda_i ,n]$ are occupied by some of the first $i-1$ cars (and so by some of the first $i$ cars).
	
	{\color{blue}Now assume that $i\in \tilde{T}$.} First of all, we claim that $i\in \tilde{T}_{\lambda_i}$. Indeed, we have that $i\in \tilde{T}_j$, for some $j\in U_{\alpha}$, and so, by the definitions of $\lambda_i$ and $\tilde{T}_j$, we have that $a_i \geq j\geq \lambda_i$. In particular, note that $\lambda_i\neq\infty$. If $i\notin \tilde{T}_{\lambda_i}$, since $\tilde{T}_{\lambda_i}\cap [1,i]\neq \emptyset$, there must exist $h\leq i-1$ such that $h\in \tilde{T}_{\lambda_i}$. This means that car $c_h$ is one of the last $u_{\alpha}(\lambda_i )$ cars having preference at least $\lambda_i$. This yields a contradiction, since $c_i$ has preference $a_i \geq \lambda_i$ but $c_i$ is not among the last $u_{\alpha}(\lambda_i )$ cars with this property (since we assumed $i\notin \tilde{T}_{\lambda_i}$), even though $i>h$.
	
	We now first consider the case in which all spots in $[\lambda_i ,n]$ have been occupied after the first $i-1$ cars have parked. In this specific case, the fact that car $c_i$ is able to park forces $r_i =i-1$ (since, as we have shown above, $a_i \geq \lambda_i$). As a consequence, using the induction hypothesis, we get
	\begin{align*}
	|\{ j\leq i\, |\, a_j \geq \lambda_i , r_j =j-1\} |&=1+|\{ j\leq i-1\, |\, a_j \geq \lambda_i , r_j =j-1\} | \\
	&\geq 1+|\{ j\leq i-1\, |\, a_j \geq \lambda_{i-1} , r_j =j-1\} | \\
	&\geq 1+ |\tilde{T}\cap [1,i-1]| \\
	&=|\tilde{T}\cap [1,i]|.
	\end{align*}    
	Moreover, the second statement of our claim trivially holds for $i$.
	
	On the other hand, {\color{blue}suppose that there exists a spot $h\in [\lambda_i ,n]$ which remains unoccupied after the first $i-1$ cars have parked.} Again, we split our argument into two cases.
	
	First, assume that $|\tilde{T}\cap [1,i-1]|=0$. 
	Notice that, in the present case, we have that $i=\min \tilde{T}_{\lambda_i}$ (i.e., $c_i$ is the first of the last $u_{\alpha}(\lambda_i )$ cars having preference at least $\lambda_i$), since otherwise there would exist an element smaller than $i$ in $\tilde{T}_{\lambda_i} \subseteq \tilde{T}$, which would contradict the assumption $|\tilde{T}\cap [1,i-1]|=0$. As a consequence, among the first $i-1$ cars there are exactly $\sum_{j=\lambda_i}^{n}|\alpha |_j -u_{\alpha}(\lambda_i )=n-\lambda_i +1=|[\lambda_i ,n]|$ cars having preference at least $\lambda_i$. Since we are assuming that all such cars are able to park, if none of them were allowed to drive backwards, then all spots in $[\lambda_i ,n]$ must be filled by the first $i-1$ cars, against our current assumption. Thus, one of those cars must have parked by driving backwards, i.e., there exists $j\in [1,i-1]$ such that $a_j\geq \lambda_i$ and $r_j =j-1$, hence
	\[
	|\{ j\leq i\, |\, a_j \geq \lambda_i , r_j =j-1\} |\geq 1=|\tilde{T}\cap [1,i]|.
	\]
	
	{\color{blue}We are now left with the case $|\tilde{T}\cap [1,i-1]|\geq 1$}. For the last time in this proof, we consider two distinct cases.
	
	If $\lambda_{i-1}=\lambda_i$, since we are assuming that $[\lambda_i ,n]=[\lambda_{i-1} ,n]$ has not been entirely occupied by the first $i-1$ cars, our induction hypothesis (statements (a) and (b) combined) implies that
	\[
	|\{ j\leq i-1\, |\, a_j \geq \lambda_{i-1} , r_j =j-1\} |>|\tilde{T}\cap [1,i-1]|,
	\]
	hence
	\begin{align*}
	|\{ j\leq i\, |\, a_j \geq \lambda_i , r_j =j-1\} |&\geq |\{ j\leq i-1\, |\, a_j \geq \lambda_{i-1} , r_j =j-1\} | \\
	&\geq 1+|\tilde{T}\cap [1,i-1]| \\
	&=|\tilde{T}\cap [1,i]|.
	\end{align*}
	
	{\color{blue}Now suppose that $\lambda_{i-1}>\lambda_i$.} Notice that $\lambda_{i-1}\neq \infty$, since $|\tilde{T}\cap [1,i-1]|\geq 1$. If there is some spot in $[\lambda_{i-1} ,n]$ which has not been occupied by the first $i-1$ cars, then we can use our induction hypothesis and we argue and conclude exactly as in the previous case. Otherwise, suppose that $[\lambda_{i-1} ,n]$ has been entirely filled by the first $i-1$ cars. This means that the spot $h$ which we are assuming to be unoccupied after the first $i$ cars have parked is such that $h\in [\lambda_i ,\lambda_{i-1}-1]$. Recall that we have proved above that $i\in \tilde{T}_{\lambda_i}$. Moreover, the fact that $\lambda_{i-1}>\lambda_i$ means that the smallest index $j\in U_{\alpha}$ such that $\tilde{T}_j$ contains elements $\leq i-1$ is \emph{strictly larger} than $\lambda_i$. Therefore, for all $s<i$, $s\notin \tilde{T}_{\lambda_i}$, that is, $i=\min \tilde{T}_{\lambda_i}$.	In other words, $c_i$ is the first of the last $u_{\alpha}(\lambda_i )$ cars having preference at least $\lambda_i$. Thus (repeating an argument similar to one of the above cases) among the first $i-1$ cars there are exactly $n-\lambda_i +1=|[\lambda_i ,n]|$ cars having preference at least $\lambda_i$. Since there is an empty spot in $[\lambda_i ,\lambda_{i-1}-1]$, and all previous cars were able to park, there is a car \emph{having preference in $[\lambda_i ,\lambda_{i-1}-1]$} which has parked by driving backwards, and so
	\begin{align*}
	|\{ j\leq i\, |\, a_j \geq \lambda_i , r_j =j-1\} |&\geq 1+|\{ j\leq i\, |\, a_j \geq \lambda_{i-1} , r_j =j-1\} | \\
	&\geq 1+|\{ j\leq i-1\, |\, a_j \geq \lambda_{i-1} , r_j =j-1\} | \\
	&\geq 1+|\tilde{T}\cap [1,i-1]| \\
	&=|\tilde{T}\cap [1,i]|,
	\end{align*}
	which finally concludes the inductive proof of statement (a) (i.e. inequality (\ref{key})) of our claim.
	
	Now it remains to complete the induction proof of statement (b), in the case when $i\in \tilde{T}$ (the case $i\notin \tilde{T}$ having already been shown). So suppose that $|\{ j\leq i\, |\, a_j \geq \lambda_i , r_j =j-1\} |=|\tilde{T}\cap [1,i]|$. By contradiction, assume that some spot in $[\lambda_i ,n]$ remains unoccupied after the first $i$ cars have parked, and denote with $h$ the smallest of those. Since $i\in \tilde{T}$, $c_i$ is among the last $u_{\alpha}(\lambda_i )$ cars having preference at least $\lambda_i$. As a consequence, there are at least $\sum_{j=\lambda_i}^{n}|\alpha |_j -u_{\alpha}(\lambda_i )=n-\lambda_i +1$ cars 
	having preference at least $\lambda_i$ among the first $i-1$ cars, and so (considering $c_i$ which has preference $a_i \geq \lambda_i$) there are at least $n-\lambda_i +2$ cars having preference in $[\lambda_i ,n]$ among the first $i$ cars. Since the available spots are only $n-\lambda_i +1$, this means that (among the first $i$ cars) there is at least one car with preference in $[\lambda_i ,n]$ which parks in $[1,\lambda_{i}-1]$ by driving backwards. Moreover, since $\lambda_i \leq h\leq n$ is an unoccupied spot, any such car must have preference in $[\lambda_i ,h-1]$. Denote with $c_m$ the last of such cars (so that $1\leq m\leq i$). Necessarily, we have $r_m =m-1$.
	
	Define a new parking rule $\rho '=r'_1 r'_2 \cdots r'_n$ by setting $r'_i =r_i$ for all $i\neq m$, and $r'_m =0$. Clearly, using the $\rho'$-Naples parking rules, the first $m-1$ cars park in the same spots they occupy using the $\rho$-Naples parking rule. 
	
	We now want to describe what happens when the cars having indices in $[m,i]$ and with preference smaller than $h$ try to park using each parking rule.
	
	Using the $\rho$-Naples parking rule (and also recalling that $\alpha \in PF_{n,\rho}$, and so all cars are able to park), the first of the above cars (i.e., $c_m$) parks in $[1,\lambda_i -1]$ and its preference satisfies $h>a_m\geq \lambda_i$; thus, all spots in $[\lambda_i ,a_m]$ have already been occupied by previous cars. Moreover, let $k_1$ be the number of the above cars having preference in $[a_m +1,h]$; by definition of $h$ and $m$
	, all such cars park in $[a_m +1,h-1]$. Finally, denote with $k_2$ the number of the above cars having preference in $[1,a_m ]$ and reaching spot $a_m$ when trying to park; clearly all such cars park in $[a_m +1,h-1]$ as well. As a consequence, we have that $k_1 +k_2 \leq |[a_m +1,h-1]|$.
	
	On the other hand, using the $\rho'$-Naples parking rule, and taking into consideration the same set of cars, we have that the first of such cars (again $c_m$) parks in $[a_m +1,h]$. Morevoer, there are $k_1$ cars among those having preference in $[a_m +1,h]$. Finally, there are at most $k_2$ cars having preference in $[1,a_m ]$ and reaching spot $a_m$ when trying to park (this is because the spot occupied by $c_m$ using the $\rho$-Naples parking rule might be occupied by one of the $k_2$ cars considered in the previous paragraph when using the $\rho'$-Naples parking rule).
	
	Therefore, using the $\rho'$-Naples parking rule, the number of cars with indices in $[m,i]$ and parking in $[a_m ,h]$ is at most $1+k_1 +k_2 \leq 1+|[a_m +1,h-1]|\leq |[a_m ,h]|$. Since, as we have already observed, all cars with indices less than $m$ park in the same spots using each of the two parking rules, we have that, using the $\rho'$-Naples parking rule, among the first $i$ cars, those with preference in $[1,h]$ are able to park in the first $h$ spots. Consequently, all cars having preference in $[h+1,n]$ (among the first $i$ cars) occupy the same spots as they did using the $\rho$-Naples parking rule. This allows us to conclude that, using the $\rho'$-Naples parking rule, the first $i$ cars are all able to successfully park. However, by definition of $\rho'$, we have that
	\[
	|\{ j\leq i\, |\, a_j \geq \lambda_i , r'_j =j-1\} |<|\{ j\leq i\, |\, a_j \geq \lambda_i , r_j =j-1\} |=|\tilde{T}\cap [1,i]|,
	\]
	and this contradicts (\ref{key}) for the parking rule $\rho'$. This terminates the proof of our claim.
	
	We are now finally ready to complete our proof. Applying our claim for $i=n$ gives
	\[
	|\textrm{supp}(\rho )|=|\{ j\leq n\, |\, r_j =j-1 \}|\geq |\{ j\leq n\, |\, a_j \geq \lambda_n , r_j =j-1 \}|\geq |\tilde{T}\cap [1,n]|=|\tilde{T}|,
	\]
	as desired.
\end{proof}

\begin{proposition}\label{attain}
	Given $\alpha \in PP_n$, let $\rho \in \Delta_n$ such that $r_i =i-1$ for all $i\in \tilde{T}$, and $r_i =0$ otherwise. Then $\alpha \in PF_{n,\rho}$.
\end{proposition}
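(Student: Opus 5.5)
The plan is a direct argument by induction on the cars: assuming the first $i-1$ cars have parked under the $\rho$-Naples rule, I show that car $c_i$ parks as well. (Lemma \ref{p-1} is not needed, and the case $\tilde{T}=\emptyset$, where $\rho=\mathbf{0}_n$, is covered automatically.) The two kinds of cars are handled separately.

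\emph{Cars $c_i$ with $i\in\tilde{T}$.} Here $r_i=i-1$, and I claim such a car can never fail.
\begin{itemize}
\item If $a_i-(i-1)\geq 1$, the car checks the $i$ spots $[a_i-i+1,a_i]$ before moving forward. Only $i-1$ cars have parked so far, so one of these spots is free and the car takes it.
\item If $a_i-(i-1)<1$, the car checks all of $[1,a_i]$ backwards and then all of $[a_i,n]$ forwards. It would therefore fail only if every spot were occupied, which is impossible since $i-1<n$.
\end{itemize}

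\emph{Cars $c_i$ with $i\notin\tilde{T}$.} Here $r_i=0$, so $c_i$ moves only forward and fails exactly when all spots in $[a_i,n]$ are occupied after the first $i-1$ cars. Suppose this happens, and derive a contradiction as follows.
\begin{itemize}
\item Let $j\leq a_i$ be minimal such that $[j,n]$ is entirely occupied by the first $i-1$ cars. Since not all $n$ spots are full, $j\geq 2$ and spot $j-1$ is free.
\item Every car parked in $[j,n]$ has preference at least $j$.
 \begin{itemize}
 \item A car parking by moving forward from a preference $\leq j-1$ would have passed spot $j-1$. That spot is still free now, so it was free then, and the car would have stopped there.
 \item A car parking by moving backwards parks below its preference, so its preference exceeds its spot, which is $\geq j$.
 \end{itemize}
\item Hence at least $n-j+1$ of the first $i-1$ cars have preference $\geq j$. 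Adding $c_i$ (recall $a_i\geq j$), at least $n-j+2$ of the first $i$ cars have preference $\geq j$.
\item By definition of the excess function, the total number of cars with preference $\geq j$ is $n-j+1+u_\alpha(j)$.
\item If $u_\alpha(j)\leq 0$, this total is $\leq n-j+1<n-j+2$, a contradiction.
\item If $u_\alpha(j)\geq 1$, then $j\in U_\alpha$. Since $i\notin\tilde{T}_j$ and $a_i\geq j$, car $c_i$ is not among the last $u_\alpha(j)$ cars with preference $\geq j$. So at least $u_\alpha(j)$ further cars with preference $\geq j$ come after $c_i$. The total is then at least $n-j+2+u_\alpha(j)$, again exceeding $n-j+1+u_\alpha(j)$.
\end{itemize}
In either case we have a contradiction, so $c_i$ parks.

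The main point to verify carefully is the claim that every car occupying $[j,n]$ has preference $\geq j$, which uses the minimality of $j$ through the freeness of spot $j-1$. The rest is counting against the excess function.
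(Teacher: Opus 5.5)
Your proof is correct and follows essentially the paper's route. Your $j$ is the paper's $h+1$ (with $h$ the largest free spot), and you use the same count of cars with preference $\geq j$ against $n-j+1+u_\alpha(j)$; the only difference is that the paper argues directly that a blocked car must lie in $\tilde{T}_{h+1}$ and so has $r_i=i-1$, whereas you split on $i\in\tilde{T}$ and argue the contrapositive, also spelling out two steps the paper only asserts (that a car with $r_i=i-1$ always parks, and that the cars occupying $[j,n]$ have preference $\geq j$).
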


\begin{proof}
	Given $i\leq n$, suppose that the first $i-1$ cars have all successfully parked. We now show that also car $c_i$ is able to successfully park.
	
	If the first $i-1$ cars have not occupied all spots in $[a_i ,n]$, then certainly car $c_i$ is able to park. Otherwise, suppose that, after the first $i-1$ cars have parked, $[a_i ,n]$ has been entirely filled, and let $h$ be the largest unoccupied spot. Clearly $h<a_i$. The $n-h$ spots in $[h+1,n]$ have all been occupied by cars having preference at least $h+1$ (as $h$ is still unoccupied). As a consequence, since $a_i >h$, the total number of cars having preference strictly larger than $h$ is at least $n-h+1$, which gives
	\[
	u_{\alpha}(h+1)=\sum_{h+1}^{n}|\alpha |_k -(n-(h+1)+1)\geq 1,
	\]
	i.e. $h+1 \in U_{\alpha}$. Moreover, since there have already been at least $n-h$ cars with preference strictly larger than $h$, car $c_i$ is one of the last $\sum_{h+1}^{n}|\alpha |_k -(n-h)=u_{\alpha}(h+1)$ cars having preference at least $h+1$, hence $i\in \tilde{T}_{h+1}\subseteq \tilde{T}$. From the hypothesis we thus have that $r_i =i-1$, and so car $c_i$ is certainly able to park.
\end{proof}

We now reconsider our running example once more to illustrate the result of the present section. Recall that $\alpha =(4,9,5,9,5,8,7,9,4,6)\in PP_{10}$, for which $U_{\alpha}=[2,9]$. The following table shows how to compute the sets $\tilde{T}_j$:

\begin{center}
	\begin{tabular}{cccc}
		\hline
		& $u_{\alpha}(j)$ & $\{i\, |\, a_i \geq j\}$ & $\tilde{T}_j$ \\
		\hline
		$j=9$ & 1 &  $\{ 2,4,8\}$ & $\{ 8\}$ \\
		$j=8$ & 1 & $\{ 2,4,6,8\}$ & $\{ 8\}$ \\
		$j=7$ & 1 & $\{ 2,4,6,7,8\}$ & $\{ 8\}$ \\
		$j=6$ & 1 & $\{ 2,4,6,7,8,10\}$ & $\{ 10\}$ \\
		$j=5$ & 2 & $\{ 2,3,4,5,6,7,8,10\}$ & $\{ 8,10\}$ \\
		$j=4$ & 3 & $[1,10]$ & $\{ 8,9,10\}$ \\
		$j=3$ & 2 & $[1,10]$ & $\{ 9,10\}$ \\
		$j=2$ & 1 & $[1,10]$ & $\{ 10\}$ \\
		\hline
	\end{tabular}
\end{center}

Therefore we obtain $\tilde{T}=\bigcup_{j\in U_{\alpha}}\tilde{T}_j =\{ 8,9,10 \}$, which means (by Theorem \ref{lower_bound}) that at least three cars need to park by driving backwards in order to have a successful strategy. Moreover, by Proposition \ref{attain}, we know that, taking $\rho =0000000789$, we have $\alpha \in PF_{10,\rho}$, and specifically we get the outcome map $\psi_{\rho}(\alpha )=(4,9,5,10,6,8,7,3,2,1)$. Notice that the total number of backward steps taken by all cars in this case is 13, and we already showed that there are strategies needing only 12 steps for this specific $\alpha$. This again confirms that the up-set $\mathcal{F}_{\alpha}$ contains some minimal elements that do not minimize $|\rho |$.

We also remark that in general the parking strategy described in Proposition \ref{attain} is not the only one minimizing the number of cars driving backwards. For instance, given $\beta=(3,3,3,2,3,3,3)\in PP_7$, we get that $\tilde{T}=\{ 7\}$ and $\rho=0000006$ is such that $\beta \in PF_{7,\rho}$. However, it is immediate to check that the parking strategies $0000400$ and $0000050$ are equally successful for $\beta$. Furthermore, rearranging the entries of a parking preference may change the number of strategies minimizing the number of cars driving backwards, as well as the size of the set $\tilde{T}$.

\section{The case $\rho \in \{ 0,1\}^n$}\label{zeroone}

In this final section we examine a particularly interesting special case, namely when the parking strategy permits at most one backward steps for each car. In other words, this means that each entry of $\rho$ is either 0 or 1. The next theorem gives a complete description of $\rho$-Naples parking functions in this case. The theorem is preceded by a simple lemma concerning 1-Naples parking functions.

\begin{lemma}\label{1-Naples}
	Let $\alpha \in PF_{n,1}$ and let $[p,q]\subseteq U_{\alpha}$ be a maximal interval. Then $|\alpha|_q \geq 2$ and $|\alpha |_j =1$ for all $p\leq j<q$. 
\end{lemma}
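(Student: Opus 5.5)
The plan is to combine Proposition~\ref{naples_excess} with the local recursion for the excess function given in Lemma~\ref{elementary_properties}(iii). No analysis of the parking process itself should be needed: the statement follows from the fact that, for a $1$-Naples parking function, the excess is pinned to the value $1$ on the whole interval $[p,q]$.

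First, since $\alpha\in PF_{n,1}$, Proposition~\ref{naples_excess} (with $k=1$) gives $u_\alpha(j)\leq 1$ for every $1\leq j\leq n$. On the other hand, every $j\in[p,q]\subseteq U_\alpha$ satisfies $u_\alpha(j)\geq 1$ by the definition of the excess set. Hence $u_\alpha(j)=1$ for all $j\in[p,q]$.

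Second, I take $j$ with $p\leq j<q$. Then $j<n$ and both $j$ and $j+1$ lie in $[p,q]$. Lemma~\ref{elementary_properties}(iii) gives $u_\alpha(j)=u_\alpha(j+1)+|\alpha|_j-1$. Substituting $u_\alpha(j)=u_\alpha(j+1)=1$ yields $|\alpha|_j=1$, which is the second claim.

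Third, the inequality $|\alpha|_q\geq 2$ is exactly item (iv) of Lemma~\ref{elementary_intervals}, which applies to any maximal interval of $U_\alpha$. It can also be recovered directly. If $q<n$, then $q+1\notin U_\alpha$, so $u_\alpha(q+1)\leq 0$, and (iii) gives $|\alpha|_q=u_\alpha(q)-u_\alpha(q+1)+1\geq 2$. If $q=n$, then $|\alpha|_n=u_\alpha(n)+1=2$ by the definition of $u_\alpha(n)$.

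I do not expect a genuine obstacle. The only point needing care is the use of Proposition~\ref{naples_excess}, which is where the hypothesis $\alpha\in PF_{n,1}$ enters, and the boundary case $q=n$ in the last step.
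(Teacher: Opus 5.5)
Your proposal is correct and follows essentially the same route as the paper. In both, Proposition~\ref{naples_excess} with $k=1$ forces $u_\alpha(j)=1$ on $[p,q]$, Lemma~\ref{elementary_properties}(iii) then gives $|\alpha|_j=1$ for $p\le j<q$, and $|\alpha|_q\ge 2$ is Lemma~\ref{elementary_intervals}(iv); your direct re-derivation of this last inequality, including the case $q=n$, is a valid extra check.
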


\begin{proof}
	The fact that $|\alpha |_q \geq 2$ is exactly statement (iv) of Lemma \ref{elementary_intervals}. Since $\alpha \in PF_{n,1}$, Proposition \ref{naples_excess} implies that $u_{\alpha}(j)=1$ for all $j\in U_{\alpha}$. Hence, for $p\leq j<q$, using (iii) of Lemma \ref{elementary_properties}, we have that $|\alpha |_j =u_{\alpha}(j)-u_{\alpha}(j+1)+1=1$.  
\end{proof}

\begin{theorem}\label{01}
	Let $\alpha =(a_1 ,a_2, \ldots ,a_n ) \in PP_n$ and $\rho =r_1 r_2 \cdots r_n  \in \{ 0,1\}^n$. We have that $\alpha \in PF_{n,\rho}$ if and only if, for all maximal intervals $[p,q]$ of $U_{\alpha}$, there exist indices $j_1 <j_2 <\cdots <j_m$, with $m\geq q-p+2$, such that $a_{j_1}=a_{j_2}$ and, for all $i=2,3,\ldots ,m$, $a_{j_i}=m-i+p$ and $r_{j_i}=1$. 
\end{theorem}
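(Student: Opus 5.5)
We prove the two implications separately. The construction behind sufficiency is a cascade: a chain of cars, each with $r=1$, in which every car is pushed back one spot by the previous one, until spot $p-1$ gets filled. Necessity comes from reading such a cascade backwards from spot $p-1$, using the rigidity that $\rho\le\mathbf{1}_n$ imposes. If $U_\alpha=\emptyset$, both sides hold trivially, since $\alpha\in PF_n$ and the condition is vacuous. So assume $U_\alpha\neq\emptyset$. Then in both directions $\rho\neq\mathbf{0}_n$ (in sufficiency because $r_{j_2}=1$), and Lemma \ref{p-1} reduces $\alpha\in PF_{n,\rho}$ to: spot $p-1$ is occupied at the end, for every maximal interval $[p,q]\subseteq U_\alpha$.

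\emph{Sufficiency.} Fix $[p,q]$ and indices $j_1<\cdots<j_m$ as in the statement, and write $s=a_{j_1}=a_{j_2}=m-2+p$. We show by induction on $i\ge 2$ that spot $a_{j_i}-1$ is occupied once car $c_{j_i}$ has had its turn. For $i=2$: after $c_{j_1}$ has had its turn, spot $s$ is occupied, by $c_{j_1}$ or by an earlier car. So $c_{j_2}$ finds $s$ occupied and, since $r_{j_2}=1$, checks $s-1$. It either parks there or finds it already occupied; in both cases $s-1$ is occupied afterwards. Since $a_{j_{i+1}}=a_{j_i}-1$ and $j_{i+1}>j_i$, the same reasoning propagates along the chain. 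At $i=m$ we have $a_{j_m}=p$, so spot $p-1$ ends up occupied, and Lemma \ref{p-1} concludes.

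\emph{Necessity.} Suppose $\alpha\in PF_{n,\rho}$ with $\rho\in\{0,1\}^n$. We carry out the following steps.
\begin{itemize}
\item Since $\rho\le\mathbf{1}_n$ componentwise, Proposition \ref{upset} gives $\alpha\in PF_{n,1}$. Then Proposition \ref{naples_excess} and Lemma \ref{1-Naples} give: $u_\alpha\equiv 1$ on $U_\alpha$; $|\alpha|_a=1$ for $p\le a<q$; and $|\alpha|_{p-1}=0$ by Lemma \ref{elementary_intervals}.
\item A crossing argument at each boundary $a\in[p,q]$. Exactly $n-a+2$ cars have preference $\ge a$, while there are only $n-a+1$ spots in $[a,n]$. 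Hence the number of cars crossing downward from $a$ to $a-1$ equals one plus the number of cars with preference $<a$ that park in $[a,n]$.
\item Since every $r_i\le 1$, a downward crossing from $a$ to $a-1$ can only be made by a car with preference exactly $a$ taking its single backward step. For $a<q$ there is only one such car. So for $a\in[p,q-1]$ there is exactly one downward crossing and no car with preference $<a$ parks in $[a,n]$.
\end{itemize}

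We then trace the chain backwards from $p-1$. Spot $p-1$ is occupied, and $|\alpha|_{p-1}=0$, so its occupant is a car $c_{j_m}$ with $a_{j_m}=p$ and $r_{j_m}=1$, which found $p$ occupied. Consider the earlier car occupying a spot $a$ that blocked the current chain car, whose preference is $a$. By the crossing step, this occupant did not come from below. Hence it either has preference $a$, or it has preference $a+1$ and drove back, with $r=1$. In the first case we stop and declare the occupant and the current car to be $j_1,j_2$. In the second case we extend the chain one step. For $a<q$ the first case is impossible because $|\alpha|_a=1$, so the chain passes through all of $p,\dots,q$. Above $q$ the same dichotomy still holds: an occupant coming from below is not a problem, because the chain car itself found its preferred spot occupied, and we only need the occupant's existence and its index being smaller. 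The delicate point is that above $q$ the crossing step no longer applies, so we must still argue that the blocking occupant has preference $\ge a$. If instead it drove forward into $a$ from below, we restart the analysis at that spot. Because indices strictly decrease along the chain, the process terminates, and it must terminate in a same-preference pair. This gives $a_{j_2}\ge q$, i.e. $m\ge q-p+2$.

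\emph{Main obstacle.} I expect the main obstacle to be this last point: making the termination argument above $q$ rigorous, where $|\alpha|_a$ is no longer controlled. It is also possible that the chain must instead be chosen differently there, for instance by taking the first car ever to reach spot $q$.
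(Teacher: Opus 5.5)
\paragraph{Verdict.} Your sufficiency argument is the paper's. For necessity you follow the paper's route: trace a chain of one-step backward parkers down from spot $p-1$, then use $|\alpha|_a=1$ on $[p,q-1]$ to force the top of the chain to be at or above $q$. However, the necessity proof has a genuine gap exactly where you flagged it.

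\paragraph{The gap.} Suppose the current chain car $c_h$ has preference $a$ and finds spot $a$ occupied by some $c_g$ with $g<h$. Above $q$ your crossing count says nothing, and you never exclude that $a_g=b<a$ and $c_g$ reached $a$ by driving forward. Your remark that such an occupant ``is not a problem, since we only need its existence and smaller index'' is wrong. A car with preference $b<a$ can serve neither as $j_1$, which needs $a_{j_1}=a_{j_2}$, nor as the next chain element, which needs preference $a+1$ and $r=1$. ``Restart the analysis at that spot'' does not produce indices satisfying the statement. So the claim that the process terminates in a same-preference pair is not established.

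\paragraph{The missing idea.} It is a one-line timing argument, not a counting one. The chain car $c_h$ parks at $a-1$, so spot $a-1$ is still free at time $h$. Now suppose $c_g$, with $g<h$ and $b=a_g<a$, ends in spot $a$. Under the Naples rule it must have found every spot of $[b,a-1]$ occupied: its preferred spot first, then each forward spot it passed. In particular spot $a-1$ was occupied at time $g<h$, which is impossible because occupied spots stay occupied. Hence the blocker has preference at least $a$. Since it parks at $a$ with $r_g\le 1$, there are only two options:
\begin{itemize}
\item $a_g=a$: close the chain with $j_1=g$;
\item $a_g=a+1$, $r_g=1$, and it parked by its backward step: extend the chain.
\end{itemize}

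\paragraph{How the proof then closes.} The timing argument works uniformly at every level, inside $[p,q]$ or above it. Indices strictly decrease along the chain, so the chain must eventually close in the first case. Then $|\alpha|_a=1$ for $a\in[p,q-1]$ gives $a_{j_1}=a_{j_2}\ge q$, that is, $m\ge q-p+2$. Your crossing count is then needed only at the bottom, to see that the occupant of $p-1$ has preference $p$. As written, your ``so'' there needs this count: at $a=p$ there is at least one crossing, and it must land in the single spot $p-1$. The paper gets the same fact from the observation that the $n-p+2$ cars with preference at least $p$ exactly fill $[p-1,n]$.
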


\begin{proof}
	We start by assuming that, for all maximal intervals of $U_{\alpha}$, the conditions in the statement of the theorem hold. So let $[p,q]$ a maximal interval of $U_{\alpha}$ and $j_1, j_2 ,\ldots ,j_m$ as above. After car $c_{j_1}$ attempts to park, spot $a_{j_1}$ is certainly occupied. Thus, when car $c_{j_2}$ tries to park, it finds spot $a_{j_2}=a_{j_1}$ occupied; since $r_{j_2}=1$, it then tries to park into spot $a_{j_2}-1=a_{j_3}$, occupying it if necessary. The same argument can be repeated for all cars $c_{j_h}$, for $h\in [2,m]$, thus in particular spot $a_{j_m}-1=p-1$ is certainly occupied. Since this holds for every maximal interval of $U_{\alpha}$, we can invoke Lemma \ref{p-1} to conclude that $\alpha \in PF_{n,\rho}$.
	
	Vice versa, suppose now that $\alpha \in PF_{n,\rho}$, and let $[p,q]$ be a maximal interval of $U_{\alpha}$. We know from Lemma \ref{elementary_intervals} that $|\alpha |_{p-1}=0$ and $u_{\alpha}(p-1)=0$. 
	Since $\rho \in \{ 0,1\}^n$, cars with preference in $[p,n]$ can only park in $[p-1,n]$. Moreover, $\sum_{i=1}^{p-2}|\alpha |_i =p-2-u_{\alpha}(p-1)=p-2$, hence the cars having preference in $[1,p-2]$ occupy precisely the spots from 1 to $p-2$, and so the spots from $p-1$ to $n$ are entirely occupied by the cars having preference in $[p,n]$.
	
	From Proposition \ref{upset} we have that $\alpha \in PF_{n,1}$, and so Lemma \ref{1-Naples} tells us that $|\alpha |_q \geq 2$ and $|\alpha |_j \geq 1$, for all $j\in [p,q-1]$.
	
	Now consider spot $p-1$: it is occupied by a car having preference at least $p$, and $\rho \in \{ 0,1\}^n$, so there exists a car $c_{h_1}$ such that $a_{h_1}=p$, $r_{h_1}=1$ and $\psi_{\rho}^{(\alpha )}(h_1 )=p-1$. Moreover, when $c_{h_1}$ parks, spot $p$ must have already been occupied by a car $c_{h_2}$ (with $h_2 <h_1$) such that $a_{h_2}=p+1$, $r_{h_2}=1$ and $\psi_{\rho}^{(\alpha )}(h_2 )=p$. 
	This same argument can be iterated, thus generating a decreasing chain of natural numbers $h_1 >h_2 >h_3> \cdots$ whose generic element $h_{\mu +1}$ is such that $a_{h_{\mu +1}}=p+\mu $, $r_{h_{\mu +1}}=1$ and $\psi_{\rho}^{(\alpha )}(h_{\mu +1})=p+\mu -1$. Since the above decreasing chain must be finite, there exists $m$ such that $a_{h_m}=p+m-2=a_{h_{m-1}}$. The substitution $h_i =j_{m+1-i}$ gives the thesis. We conclude by observing that, since $|\alpha |_j \geq 1$, for all $j\in [p,q-1]$, then necessarily $a_{j_1}=a_{j_2}=p+m-2\geq q$, hence $m\geq q-p+2$.   
\end{proof}

As usual, we propose an example to illustrate our result. Consider the parking preference $\alpha =(5,10,11,1,5,11,10,4,3,9,10,8,3)\in PP_{13}$, with $U_{\alpha}=\{ 3\} \cup [8,10]$. It is easy to see that $\alpha$ is in fact a 1-Naples parking function. Now let $\rho =0000011001011\in \Delta_{13}$. For the maximal interval $\{ 3\}$ of $U_{\alpha}$ we have $a_9 =a_{13}=3$, with $r_{13}=1$, and for the maximal interval $[8,10]$ we have $a_3 =a_6 =11$, $a_7 =10$, $a_{10}=9$, $a_{12}=8$, with $r_6 =r_7 =r_{10}=r_{12}=1$. Therefore, Theorem \ref{01} assures that $\alpha \in PF_{13,\rho}$. Notice that, for the maximal interval $[8,10]$, we could also choose the sequence of indices $2,7,10,12$ (which means that the sequence of indices mentioned in Theorem \ref{01} is in general not unique).

\section{Further work}\label{further}
  
The notion of $\rho$-parking function that we have introduced in the present paper is still far from being completely understood. As a matter of fact, we have just scratched the surface of the subject, and a lot of intriguing open questions remain to be answered. Here we just mention a few possible avenues for future research.

First of all, we have not been able to fully understand the minimal elements of realizers in the lattices of parking strategies. This is the most natural open problem that directly arises from our investigations.

In Section \ref{zeroone} we have completely characterized $\rho$-Naples parking functions in the specific case when the entries of $\rho$ are all equal to 0 or 1. It may be interesting to find other parking strategies for which a full description of successful preferences can be provided. Natural candidates are parking strategies having a small fixed number (like one or two) of nonzero entries, or the parking strategies suggested in Chapter 1.13 of \cite{CCHJR} (which, suitable translated into our notation, for a given length $n$, are the parking strategy defined by setting $r_i =i-1$ when $i\leq \lfloor \frac{n+2}{2}\rfloor$ and $r_i =n-i+1$ when $i>\lfloor \frac{n+2}{2}\rfloor$).

By definition, the set of parking strategies coincides with the set of \emph{inversion sequences}, which in turn are a well known encoding of permutations. In particular, our lattices of parking strategies are isomorphic (actually, equal) to the middle orders on permutations introduced in \cite{BFT}. Is it possible to relate the permutation associated with a given parking strategy $\rho$ with properties of the set of $\rho$-Naples parking functions? 

As we briefly mentioned in the Introduction, classical parking functions have appeared in various mathematical contexts. It would be interesting to investigate whether the many generalizations of parking functions that have been defined in recent years admit similar treatments. In fact, most of such generalizations have been motivated by a modification of the parking rule (or more generally of the parking model), so it is not immediately apparent if they can have any interpretation or application in other contexts. Pursuing this study in the case of our $\rho$-Naples parking function seems to us an intriguing and stimulating project.

\end{document}